\documentclass[a4paper,10pt]{article} 

\usepackage{xcolor}
\usepackage{graphicx}
\usepackage{amsfonts}
\usepackage{amstext}
\usepackage{subfigure}
\usepackage{amsthm}
\usepackage{amssymb,amsmath,amsbsy}
\usepackage[T1]{fontenc}
                
\usepackage[utf8]{inputenc}
 \usepackage{authblk}
\usepackage{caption}
\usepackage{bm}
\usepackage{comment}


\newcommand{\R}{\mathbb{R}}
\newcommand{\CC}{\mathbb{C}}
\newcommand{\E}{\mathbb{E}}
\newcommand{\0}{\mathbf{0}}
\newcommand{\1}{\mathbf{1}}

\newcommand{\x}{\mathbf{x}}

\newcommand{\uuu}{\mathbf{u}}
\newcommand{\vvv}{\mathbf{v}}
\newcommand{\w}{\mathbf{w}}
\newcommand{\y}{\mathbf{y}}
\newcommand{\Y}{\mathbf{Y}}
\newcommand{\z}{\mathbf{z}}
\newcommand{\Z}{\mathbf{Z}}
\newcommand{\I}{\bm{I}}

\newcommand{\LL}{\bm{L}}

\newcommand{\K}{\bm{K}}
\newcommand{\U}{\bm{U}}
\newcommand{\V}{\bm{V}}

\newcommand{\W}{\bm{W}}
\newcommand{\A}{\bm{A}}
\newcommand{\B}{\bm{B}}
\newcommand{\C}{\bm{C}}

\newcommand{\DD}{\bm{D}}
\newcommand{\PP}{\bm{P}}

\newcommand{\T}{\bm{T}}
\newcommand{\ep}{\varepsilon}

\newcommand{\Lada}{\bm{\Lambda}}

\newcommand{\tr}{\mathrm{tr}}
\newcommand{\diag}{\mathrm{diag}}

\newcommand{\Prob}{\mathrm{Prob}\,}



\newtheorem{theorem}{Theorem}
\newtheorem{proposition}{Proposition}

\newtheorem{remark}{Remark}

\newtheorem{definition}{Definition}


\title{The non-backtracking random walk and its usage
  for vertex clustering}

\author{Marianna Bolla \thanks{Department of Stochastics, Budapest University
 of Technology and Economics, M{\H u}\-egyetem rkp. 3, Budapest 1111, Hungary.
 E-mail: marib@math.bme.hu} 
}

\begin{document}

\maketitle

\section*{Abstract}

In case of sparse graphs,
relation between the real eigenvalues of the non-backtracking matrix 
and those of the  non-backtracking transition probability matrix
is considered with respect to vertex clustering.
For this purpose, the random walk along the non-backtracking graph is
considered, the vertices of which are the bioriented edges, and the adjacency
relation depends on whether the random walk goes through the oriented edges
with the rule of  ``not going back in the next step''.
This is encoded in the non-backtracking matrix that is the
adjacency matrix of the non-backtracking graph.
The structural real eigenvalues of the transition probability matrix are
related to
the constant multiples of the non-backtracking one, the concordance of which
indicates the existence of a sparse stochastic block model behind the graph. 
``Inflation--deflation'' techniques are also developed for clustering
the vertices of the original
graph together with real world applications.

\vskip0.2cm  
\noindent
\textbf{Keywords}: 
non-backtracking  transition probability matrix, Bauer--Fike perturbations,
sparse stochastic block model, k-means clustering. 

\vskip0.2cm
\noindent
\textbf{Mathematics Subject Classification}: 05C50, 05C80, 62H30

\section{Introduction}\label{intro}

Spectral graph theory was basically developed for
dense graphs, where spectra of different graph based matrices are
used for finding strongly connected clusters of the vertices,
see, e.g., \cite{Bolla13,Brouwer,Chung} for an overview. Now,
for sparse graphs, the
variety of these matrices is supplemented with the new generation of
non-backtracking based matrices and used for vertex clustering again.
Roughly speaking, these matrices are based on the edges of the sparse
graph, and a random walk along the artificially bioriented edges is
considered, where going back in the next step is prohibited. This
assumption helps to avoid the extreme effect of the high degree
vertices and makes the random walk Markovian on this so-called
non-backtracking graph. Note that it is not Markovian on the original graph,
since it has the memory of not going back in the next step.
To be more exact, some notation is introduced.

Let $G$
be a simple graph on $n$ vertices and $m$ edges. The graph is dense if
the average degree $c=\frac{2m}{n}$ is proportional to $n$, and it is
sparse if $c=o (n)$.
It makes sense in graph processes, with increasing $n$.
For example, in the sparse stochastic block model to be considered
(also see~\cite{Bolla25,Bordenave,Decelle}), $c$ is of
constant order. The adjacency matrix $\A$ and the diagonal degree matrix
$\DD$ (containing the vertex degrees that are the row-sums of $\A$) are
well-known. The (Kirchoff) Laplacian of $G$ is $\LL =\DD -\A$. This is a
positive semidefinite matrix, where the multiplicity of its 0 eigenvalue
shows the number of the connected components of $G$, and the eigenvectors
corresponding to the eigenvalues separated from 0 help us to minimize
multiway cuts or partition cuts (latter penalizes too different cluster
sizes) of the underlying dense graph. The spectrum of the
transition probability matrix $\DD^{-1} \A$ is more useful in the
random walk situation, which contains the conditional probabilities
from going to a vertex, given that the walk is at another one. Its
eigenvalues are in the $[-1,1]$ interval; the multiplicity of 1 is the
number of connected components of $G$, and -1 is an eigenvalue if and only if
$G$ is bipartite. The random walk itself is generated by the random walk
Laplacian $\I - \DD^{-1} \A$, whose eigenvalues are 1 minus the eigenvalues of
the former one, with the same eigenvectors. Its eigenvalues are also the same as
those of the symmetric normalized Laplacian
$\LL_{\DD} = \DD^{-1/2} \LL \DD^{-1/2}$,
the spectrum of which is  used to estimate the so-called
normalized cuts and the Cheeger constant, see~\cite{Bolla13,Chung} for details.

The multiplicity of the zero eigenvalue of $\LL$ gives the number of the
connected components of $G$. By Sylvester's inertia theorem,
this number is the same as the multiplicity
of the zero eigenvalue of the normalized Laplacian, the eigenvalues of
which are the same as those of the random walk Laplacian. Further,
the multiplicity of this zero eigenvalue is the same as the multiplicity
of the eigenvalue 1 of the transition probability matrix. In the sequel,
we assume that this multiplicity is 1, so our graph is connected.

In the connected case, the eigenvalues of the so-called normalized
modularity matrix (see~\cite{Bolla13}) differ from those of 
$\I -\LL_{\DD} =\DD^{-1/2} \A \DD^{-1/2}$ (here $\DD^{-1/2} \A \DD^{-1/2}$ is the
normalizd adjacency matrix) only in that 1 is not an eigenvalue of it, but
it has an additional eigenvalue 0.
We proved in~\cite{Bolla15} that the largest
eigenvalue of the normalized modularity matrix is positive if and only if
our graph is not complete multipartite (including the complete graph).
Equivalently, the second largest eigenvalue (excluding the trivial 1)
of the transition probability matrix is positive if and only if
our graph is not complete multipartite.

Assume that $G$ is connected.
The random walk on $G$ forms a Markov chain that has a unique stationary
distribution of positive entries  if and only if it is ergodic,
which means irreducibility
and aperiodicity. The sufficient and necessary condition for irreducibility
that the second largest eigenvalue of the transition probability matrix is
positive, i.e., $G$ is nor complete, neither complete multipartite.
The sufficient and necessary condition for aperiodicity that the smallest
eigenvalue of the transition probability matrix is greater than -1, i.e.,
$G$ is not bipartite.

The non-backtracking matrix (see,
e.g.,~\cite{Glover21,Moore,Mossel,Newman22,Torresetal}) is a $2m\times 2m$
matrix, the vertices of which are the bioriented edges of $G$ (considered in
both possible directions), and the adjacency relation detects whether it is
possible to proceed along two connected edges, provided that going back in the
next step is prohibited. This matrix $\B$ is usually not symmetric, but it is
considered as the 0/1 adjacency matrix of the so constructed non-backtracking
graph. In the sparse stochastic block model, it has some outstanding real
eigenvalues, and the corresponding eigenvectors will be used for clustering,
after transforming and shrinking them into $n$-dimensional real vectors.

The non-backtracking graph $\cal G$ has $\B$ as adjacency matrix.
We can initiate the above random walk along $\cal G$.
The non-backtracking transition probability matrix is also introduced as
${\cal T} ={\cal D}_{row}^{-1} \B$, where the diagonal matrix
${\cal D}_{row}$ contains the row-sums of $\B$ in its main diagonal.
${\cal T}$  is spectrally equivalent to the non-backtracking random walk 
Laplacian $\I_{2m} -{\cal T}$ (see~\cite{Jost,Mulas}) that generates the
random walk along the non-backtracking graph. The spectrum of ${\cal T}$ is in
the $[-1 ,1]$ interval,
 where the eigenvectors corresponding to the outstanding real eigenvalues
 (separated from 0) are used for clustering.
 This random walk is already Markovian; on the contrary, the non-backtracking
 random walk on the original graph is not Markovian (it has the memory of
 not turning back in the next step).
$\cal T$ is also a doubly stochastic matrix that leads to a uniform
stationary distribution which is unique  if and only if
the Markov chain is ergodic.
This holds for our connected graph $G$ if it is not bipartite or complete
multipartite (including the complete graph);
equivalently, the non-backtracking graph $\cal G$ is such, provided the minimum
vertex degree of $G$ is at least 2 and it is not the cycle graph
(but the cycle graph cannot be bipartite or complete multipartite).

In~\cite{Mulas}, the notion of a \textit{circularly k-partite graph} is defined.
For $k=2$, this is a bipartite graph, and $G$ is bipartite if and only if
the corresponding $\cal G$ is bipartite,
because of vertex-degree considerations,
see~\cite{Jost}. Further, if $G$ is circularly $k$-partite, then $\cal G$ is
$k$-partite, but usually not vice versa, see~\cite{Mulas}.
The fact, that $\cal T$ has a positive eigenvalue (next to the trivial 1)
is equivalent to that $\cal G$ is not complete $k$-partite with any $k$.
In this case, $G$ with no $k$ can be circularly $k$-partite. 

As for the Markovian random walk on the non-backtracking graph,
the walk is recurrent if the transition
probability matrix ($\cal T$) is irreducible (this is the case if our graph is
connected, itself not a cycle and the node degrees are at least 2,
exactly when $\B$ is irreducible).
In this case, starting from any vertex (which is now a directed edge)
you can reach itself (or any other vertex) with positive probability in finite
(albeit large) number of steps.
In this case, the powers of $\cal T$ show the necessary
number of steps: this is the smallest $N$ for which the diagonal of
${\cal T}^N$ is strictly positive.
Our $\B$, and so, $\cal T$, contains a lot of zeros, so this $N$ can be large.

Since our $\cal T$ is also doubly stochastic, under the above conditions,
the process is ergodic with a
uniform stationary distribution. This means that starting from any directed
edge, after ``many'' steps, we reach any directed edge with the same
probability.
On the contrary, when we consider the Markovian random walk on the original
graph $G$, the stationary distribution follows the vertex degrees, i.e., the
probability of reaching vertex $i$ is $\frac{d_i}{2m}$.
Therefore, the random
walk on $G$ concentrates on the high degree vertices, while the random
walk on $\cal G$ does not do it, it can reach any directed edge with the same
probability, due to the non-backtracking rule. This is another point to
use $\cal G$ instead of $G$.
 
The organization of the paper as follows. In Section~\ref{nonb}, the
spectral properties of the matrices $\B$ and $\cal T$ are discussed, based on
the relevant literature (without proofs), but some new statements are also
introduced (with proofs). With the help of these facts, in Section~\ref{rel},
relation between the real eigenvalues of $\B$ and $\cal T$ is
considered. In Section~\ref{clust},  
``inflation--deflation'' techniques are  developed for clustering
the vertices of the original
graph when it comes from the sparse stochastic block model
of~\cite{Bordenave,Decelle}.
Application to a real world graphs is shown in Section~\ref{appl}.

\section{Non-backtracking  graph and transition probability matrix}\label{nonb}

The  \textit{non-backtracking matrix}
$\B =(b_{ef} )$ of a simple graph $G$ on $n$ vertices and $m$ edges
is defined as a $2m \times 2m$ 
non-symmetric matrix of 0/1 entries
(see, e.g.,~\cite{Decelle,Glover21,Jost,Krzakala}:  
$$
 b_{ef}= \delta_{e\to f} \delta_{f \ne e^{-1}} ,
$$
for $e,f \in E^{\rightarrow }$, where $E^{\rightarrow }$ is the set of bioriented
edges of $G$ (each existing edge is considered in both possible directions),
and for $e=[ i,j ]$ the reversely oriented edge is denoted $e^{-1}$, so
$e^{-1} = [ j ,i ]$; further, the $e\to f$ relation means that the endpoint
of $e$ is the starting point of $f$, denoted by $\textrm{out} (e) =
\textrm{in} (f)$;
and $\delta$ is the (1/0) indicator of the
event in its lower index. Therefore, $b_{ef} =1$ exactly when
$e\to f$ holds, but $f\ne e^{-1}$.
Since the characteristic polynomial of $\B$ has real coefficients, its
complex eigenvalues occur in conjugate pairs in the bulk of its spectrum.
Note that the underlying simple graph is not directed, just oriented, as
we consider its edges in both possible directions. 

If $G$ is connected with $d_{min} \ge 2$, then
 for all eigenvalues of $\B$, $|\mu | \ge 1$ holds.
 In particular, if $d_{min} >2$, then the
 eigenvalues of $\B$ with $|\mu |=1$ are $\pm 1$'s.
If $G$ is a connected graph that is not a cycle and $d_{\min} \ge 2$, then
$\B$ is irreducible. Therefore, the Frobenius theorem is applicable
to it, and under the above conditions, $\B$ has a single positive real
eigenvalue among its maximum absolute value ones
with corresponding eigenvector of all positive real coordinates.

 Furthermore, in case of certain random graphs (e.g., in the sparse stochastic
 block model, see~\cite{Bolla24,Decelle,Krzakala,Moore}),
 there is a bulk of the spectrum of $\B$
 (containing $\pm 1$'s and complex conjugate pairs), the other
  so-called structural eigenvalues are real, their moduli are
  greater than $\sqrt{c}$ (where $c$ is the
  average degree of the graph); they are
  positive in the assortative case, and the corresponding eigenvectors are
  nearly orthogonal, see~\cite{Bordenave}.

  Though $\B$ is not a normal matrix, it obeys the so-called PT-symmetry.
This can be described by involution and swapping.
  Introduce the notation
$$
 {\breve x}_e := x_{e^{-1}} ,  \quad  e\in  E^{\rightarrow } 
$$
for relating the coordinates of the $2m$-dimensional vectors
$\x$ and $\breve \x$  of $\CC^{2m}$. Now $\x$ is
partitioned  into two $m$-dimensional vectors $\x^{(1)}$ and $\x^{(2)}$, where
the coordinates of $\x^{(1)}$ correspond to the $j\to i$ edges with $j<i$
and those of $\x^{(2)}$ correspond to their inverses.
Then $\breve \x$ is obtained  
by swapping the first $m$ and second $m$ coordinates of $\x$.
Let $\V$ denote the following involution 
($\V =\V^{T}$,
$\V^2 =\I$, $\V$ is an orthogonal and symmetric matrix at the same time):
\begin{equation}\label{ve}
 \V =\begin{pmatrix}
 \bm O  & \I_m \\
 \I_m  & \bm O
\end{pmatrix} ,
\end{equation}
where the blocks are of size $m\times m$. With it, 
$\V \x = {\breve \x}$ and, vice versa,  $\V {\breve \x }= \x$.

Under non-backtracking graph $\cal G$ of $G$ we understand the directed
graph on $2m$ vertices, that are the oriented edges,
with adjacency relation corresponding to the definition of the
non-backtracking matrix. 
The non-symmetric adjacency matrix of the non-backtracking graph is just $\B$.
Historically, it is $\B^T$ that is called non-backtracking matrix,
but its eigenvalues are the same as those of $\B$.
Further, $\B^T \V = \V \B$, and $\B^T = \V \B \V$. 
This phenomenon is called PT (parity-time) invariance is physics.
 Also, $(\B \V )^T =\V \B^T =\V \V \B \V =\B \V$,
 so $\B \V$  and $\V \B$ are 
 symmetric matrices that also follows by Proposition 1 of~\cite{Bolla25}.
In the sequel, the spectral decomposition of these
symmetric matrices is described, which implies the singular value
decomposition (SVD) of $\B$.
However, the eigenvalues of $\B$ are quite different.

Now the row-sums of $\B$ are
put in the $2m\times 2m$ diagonal matrix ${\cal D}_{row}$;
here calligraphic letter is
used so that to distinguish from the diagonal degree-matrix $\DD =
\diag (d_1 ,\dots ,d_n)$ of the original graph $G$.
The diagonal entries of the row-sums of $\B^T$, or equivalently, those of the
column-sums of $\B$ are contained in
the diagonal matrix ${\cal D}_{col}$; by the PT-invariance,
${\cal D}_{col} =\V {\cal D}_{row} \V$;
so  $\diag ( {\cal D}_{row} )$ and   $\diag ( {\cal D}_{col} )$ are swappings of
each other. Trivially, the diagonal entry
of ${\cal D}_{row}$,  corresponding to the oriented edge $[i,j]$ is  $d_j -1$.
Since it has multiplicity $d_j$,
the number of edges in the non-backtracking graph is
$\sum_{j=1}^n d_j (d_j -1 ) = \sum_{j=1}^n d_j^2 -2m$; also see~\cite{Jost}. 

Note that the non-backtracking random walk on the original graph is not
Markovian (has the memory that there is no way back in the next step), but it is
Markovian on the graph of the directed edges with transition probability
matrix ${\cal T} :={\cal D}_{row}^{-1} \B$.
It means that the probability of going from the oriented edge $e$ to the
oriented edge $f$ is
$\Prob (e \to f) = \frac{1}{d_e } b_{ef}$ .
It is 0 if $f=e^{-1}$ or if the end-vertex of $e$ is not the start-vertex of $f$;
otherwise, it is $\frac{1}{d_e }$, where $d_e$ is the diagonal entry of 
${\cal D}_{row}$, corresponding to the
oriented edge $e =[i,j]$, i.e., it is $d_j -1$.
This random walk on the oriented edges is already Markovian, as a
forbidden transition
corresponds to a 0 entry of $\B$, so it has 0 probability.

At the same time, the matrix generating the random walk
through the oriented edges is the non-backtracking random walk Laplacian
${\cal L} =\I_{2m} -{\cal T}$ of~\cite{Jost,Mulas}. 
Its eigenvalues are 1 minus the eigenvalues of $\cal T$ with the same
eigenvectors.
It is known that 0 is an eigenvalue of $\B$ if and only if $G$ contains
vertices of degree one (for example, if it is a tree).
Therefore, if $d_{min} \ge 2$,
then 1 cannot be an eigenvalue of $\cal L$, and the spectral gap of the
eigenvalues of $\cal L$ from 1 is investigated in~\cite{Jost}, and
it is bounded from below by $\frac1{d_{max} -1}$ and proved that this
  bound is sharp.
  The parity time
  symmetry also holds for $\cal T$, i.e., ${\cal T} \V$ and $\V {\cal T}$
  are symmetric matrices; further, ${\cal T}^T =\V {\cal T} \V$.

  In~\cite{Bauer} it is proved that the eigenvalues of $\cal L$ are
contained in the complex disc of center 1 and radius 1. In particular, its
real eigenvalues are in the [0,2] interval. Analogously, the eigenvalues of
$\cal T$ are in the complex disc of center 0 and radius 1;
the real ones are in the $[-1 ,1]$ interval.
The number 2 is an eigenvalue of $\cal L$, or
equivalently -1 is an eigenvalue of $\cal T$
if and only if the underlying simple graph is bipartite. Furthermore,
0 is always an eigenvalue of $\cal L$, and its multiplicity is equal to
the number of
the connected components of $\cal G$, which is
the same as the number of the connected components of $G$, whenever
none of them is the cycle graph.
Observe that $\cal L$ rather resembles the random walk Laplacian, which is
spectrally equivalent to the normalized Laplacian. However, it will not be
symmetric, as we normalize $\B$  with the squareroots of its row- and
column-sums from the left and from the right, respectively.

In the present, asymmetric situation, there are complex eigenvalues
(in conjugate pairs) of $\cal T$ too and
we distinguish between  right and left eigenvectors as follows.

\begin{definition}[Right and left eigenvectors]
  Let the matrix $\A$ be $n\times n$ with possible
  complex entries. 
  The vector $\uuu \in \CC^n$ is a right eigenvector of $\A$ with eigenvalue
  $\lambda \in \CC$ if $\A\uuu =\lambda\uuu$.
  The vector $\vvv \in \CC^n$ is a left eigenvector of $\A$ with the same
  eigenvalue $\lambda$ if
  $\vvv^* \A =\lambda\vvv^*$ (equivalently, $\A^* \vvv ={\bar \lambda} \vvv$).
  If $\A$ is diagonalizable, then
  $$
   \A = \sum_{i=1}^n \lambda_i \uuu_i \vvv_i^* =\U \Lada \U^{-1} ,
  $$
  where $\U =(\uuu_1 ,\dots ,\uuu_n )$ column-wise and $\U^{-1}$ contains the
  vectors $\vvv_i^*$ row-wise.
\end{definition}\label{lr}
  Note that the matrices $\uuu_i \vvv_i^*$ in the above dyadic decomposition
  are (usually skew) projections (idempotent) as the right- and left
  eigenvectors form
  a biorthonormal system: $\vvv_i^* \uuu_j  = \uuu_j^* \vvv_i =\delta_{ij}$ and
   $\vvv_i^* \A \uuu_j =\lambda_i \delta_{ij}$ for $i,j=1,\dots ,n$.

\begin{theorem}\label{th1}
  Assume that $\B$ is irreducible and diagonalizable. Then
  the eigenvalues of ${\cal T} ={\cal D}_{row}^{-1} \B$ are allocated within
  the closed circle of
  center $\0$ and radius 1 of the complex plane $\CC$, and 1 is a single
  real eigenvalue. Furthermore, the right eigenvectors $\z_i$'s corresponding
  to the real eigenvalues $\lambda_i$ $(i=1,\dots ,k)$ of $\cal T$
  can be normalized so that they form a ${\cal D}_{row}$-orthonormal system:
  $\z_i^T {\cal D}_{row} \z_j =\delta_{ij}$ for $i,j =1,\dots ,k$.
  Further, with this normalization of $\z_i$'s,
  $\lambda_i \textrm{sign} (\lambda_i ) = \| \z_i \|^2$
  and $\w_i =- \textrm{sign} (\lambda_i ) \frac1{\lambda_i } {\breve \z_i}$
  is the corresponding left eigenvector
  of $\cal T$  for which $\z_1 ,\dots ,\z_k  $ and $\w_1 ,\dots ,\w_k$ form
  a biorthonormal system: $\z_i^T \w_j =\delta_{ij}$ for $i,j =1,\dots ,k$.
\end{theorem}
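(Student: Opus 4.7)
I would split the theorem into three stages: the location of the spectrum and simplicity of the eigenvalue~$1$; identification of the left eigenvectors via the PT-involution $\V$; and the ${\cal D}_{row}$-orthonormality together with the scalar identities $\lambda_i=\|\z_i\|^2$ and $\z_i^T\w_j=\delta_{ij}$.

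For the first stage, note that ${\cal T}={\cal D}_{row}^{-1}\B$ is row-stochastic because the diagonal entries of ${\cal D}_{row}$ are the row-sums of $\B$, so ${\cal T}\1=\1$; irreducibility of $\B$ transfers to the Markov chain on the $2m$ oriented edges, and by Perron--Frobenius for stochastic matrices every eigenvalue satisfies $|\lambda|\le 1$ and $1$ is simple. For the second stage, I would invoke the PT-identity ${\cal T}^T=\V{\cal T}\V$ cited earlier in this section: multiplying ${\cal T}\z_i=\lambda_i\z_i$ by $\V$ on the left and using $\V{\cal T}={\cal T}^T\V$ gives ${\cal T}^T\breve{\z}_i=\lambda_i\breve{\z}_i$. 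Hence $\breve{\z}_i$ is a left eigenvector of ${\cal T}$ with the same real eigenvalue, so $\w_i$ is forced to be a scalar multiple of $\breve{\z}_i$; the precise scalar $-1/\lambda_i$ has to come out of the normalization step below.

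For the pairwise ${\cal D}_{row}$-orthogonality among right eigenvectors belonging to distinct positive real eigenvalues, starting from $\B\z_i=\lambda_i{\cal D}_{row}\z_i$ I would form the scalar $\z_j^T\V\B\z_i$ and compute it in two ways: directly, by substituting the eigenvalue equation for $\z_i$, and indirectly, by using the symmetry of $\V\B$ together with $\V{\cal D}_{row}={\cal D}_{col}\V$ and the eigenvalue equation for $\z_j$. I expect this to produce $(\lambda_i-\lambda_j)\,\z_j^T{\cal D}_{row}\z_i=0$, after which rescaling yields $\z_i^T{\cal D}_{row}\z_j=\delta_{ij}$. The standard biorthogonality of the left/right eigensystem of a diagonalizable matrix then immediately gives $\z_i^T\w_j=0$ for $i\ne j$.

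The diagonal identities $\lambda_i=\|\z_i\|^2$ and $\z_i^T\w_i=1$ (equivalently $\z_i^T\breve{\z}_i=-\lambda_i$) are the main obstacle. Three distinct quadratic forms in $\z_i$ are in play---the Euclidean, the $\V$-twisted, and the ${\cal D}_{row}$-weighted---and the normalization pins down only the last one. My plan is to start from $\z_i^T\B\z_i=\lambda_i\z_i^T{\cal D}_{row}\z_i=\lambda_i$, decompose $\z_i$ into its $\V$-symmetric and $\V$-antisymmetric parts $\z_i^{\pm}=(\z_i\pm\breve{\z}_i)/2$, and use the PT-identity $\V\B=\B^T\V$ together with the eigenvalue equation to show that the two parts decouple in $\z_i^T\B\z_i$ and force $\z_i$ to sit in the antisymmetric eigenspace of $\V$. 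Once $\breve{\z}_i=-\z_i$ is established, $\z_i^T\breve{\z}_i=-\z_i^T\z_i$ and the normalization give both scalar identities simultaneously, closing the proof. This decoupling step is where I expect the argument to be most delicate.
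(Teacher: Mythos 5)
Your framing of the three stages is sensible, and stages one and two are fine, but the two algebraic steps that carry the main content both contain real gaps. For the ${\cal D}_{row}$-orthogonality, write out the two-way evaluation of $\z_j^T\V\B\z_i$: substituting $\B\z_i=\lambda_i{\cal D}_{row}\z_i$ and using $\V{\cal D}_{row}={\cal D}_{col}\V$ gives $\lambda_i\,\z_j^T{\cal D}_{col}\breve{\z}_i$, while using $(\V\B)^T=\V\B$, $\B^T\V=\V\B$, and the equation for $\z_j$ gives $\lambda_j\,\z_j^T{\cal D}_{row}\breve{\z}_i$. The resulting identity, $\lambda_i\,\z_j^T{\cal D}_{col}\breve{\z}_i=\lambda_j\,\z_j^T{\cal D}_{row}\breve{\z}_i$, involves the swapped vector $\breve{\z}_i$ and two \emph{different} weight matrices; it is not of the form $(\lambda_i-\lambda_j)\,\z_j^T{\cal D}_{row}\z_i=0$ unless the graph is regular (so that ${\cal D}_{row}={\cal D}_{col}$). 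The paper does not attempt such a calculation: it treats $\B\z=\lambda{\cal D}_{row}\z$ as a generalized eigenvalue problem for the diagonalizable $\B$ against the positive definite ${\cal D}_{row}$, invokes the simultaneous-diagonalization theory from R\'ozsa to produce a ${\cal D}_{row}$-orthonormal $\Z_k$ directly, and records the congruence $\Z_k^T\B\Z_k=\Lambda_k$ as a by-product.

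The decoupling step for $\V$-antisymmetry also stops short. It is true that $\V\B=\B^T\V$ forces the cross terms to cancel, $\z^{+T}\B\z^-=-\z^{-T}\B\z^+$, so that $\z^T\B\z=\z^{+T}\B\z^+ +\z^{-T}\B\z^-$; but a block-diagonal quadratic form does not imply that one of the blocks vanishes. You still need a separate argument that $\breve{\z}_i=-\z_i$ before you can read off $\z_i^T\breve{\z}_i=-\|\z_i\|^2$. Moreover, even after that, the normalization $\z_i^T{\cal D}_{row}\z_i=1$ only gives $c_i=-1/\|\z_i\|^2$; the missing link to $c_i=-1/\lambda_i$ is precisely the identity $\|\z_i\|^2=\lambda_i$, which you have not supplied. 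The paper obtains $c_i^2=\lambda_i^{-2}$ first, by a different mechanism: it compares $\W_k^T\B^T\W_k=\C_k^2\Lambda_k$ with $\W_k^T\B^{-1}\W_k=\Lambda_k^{-1}$, using that $\B^T$ and $\B^{-1}$ have identical action on the subspace cut out by the constraints $\sum_{e:\,out(e)=j}z_e=0$ (Propositions \ref{apropos}--5 and the SVD of $\B$), and only then uses $\z^{(1)}+\z^{(2)}=\0$ to fix the sign and deduce $\lambda_i=\|\z_i\|^2$. If you want to keep your more direct route, one way to close the final gap is: from $\z_i^T{\cal D}_{row}\z_i=1$ and the eigenvalue equation, $\z_i^T\B\z_i=\lambda_i$; writing $\B=(\bm{End}\,\bm{End}^T-\I)\V$ and using both $\bm{End}^T\z_i=\0$ (Proposition 5) and $\breve{\z}_i=-\z_i$ yields $\z_i^T\B\z_i=\|\z_i\|^2$, hence $\|\z_i\|^2=\lambda_i$ — but all of this hinges on first proving $\breve{\z}_i=-\z_i$, which remains the unresolved point.
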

Note that we use the notation $\z_i^T$ as the eigenvectors,
corresponding to real eigenvalues of a matrix of real entries,
also have real coordinates.
However, for complex vectors and matrices, the notation $^*$ is used for
transposition after complex conjugation.
Also note that the condition $d_{min} \ge 2$
ensures that $\B$ and so, $\cal T$ are diagonalizable.

Before going to the proof, we collect some facts and statements about the
eigenvalues and left and right eigenvectors of $\cal T$, including the complex
ones too.

\begin{proposition}
  The transition probability matrix $\cal T$ is a doubly stochastic matrix.
\end{proposition}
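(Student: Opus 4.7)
The plan is to verify the three defining properties of a doubly stochastic matrix: nonnegative entries, unit row sums, and unit column sums. Nonnegativity is immediate, since $\mathcal{T}_{ef} = b_{ef}/d_e$ with $b_{ef}\in\{0,1\}$ and $d_e = d_j-1 \ge 1$ under the standing assumption that $d_{min}\ge 2$ and $G$ is not a cycle. The row-sum identity $\mathcal{T}\mathbf{1}_{2m} = \mathbf{1}_{2m}$ is built into the construction: by definition $\B \mathbf{1}_{2m} = \mathcal{D}_{row}\mathbf{1}_{2m}$, and left-multiplying by $\mathcal{D}_{row}^{-1}$ delivers the claim.

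The only step that requires an argument is the column-sum identity $\mathbf{1}_{2m}^T \mathcal{T} = \mathbf{1}_{2m}^T$, or equivalently $\mathcal{T}^T \mathbf{1}_{2m} = \mathbf{1}_{2m}$. I would read this off directly from the PT-invariance relation $\mathcal{T}^T = \V\mathcal{T}\V$ already recorded in Section~\ref{tran}, together with the trivial observation that $\V \mathbf{1}_{2m} = \mathbf{1}_{2m}$ (the involution only swaps two equal blocks of the all-ones vector). The chain $\mathcal{T}^T \mathbf{1}_{2m} = \V\mathcal{T}\V\mathbf{1}_{2m} = \V\mathcal{T}\mathbf{1}_{2m} = \V\mathbf{1}_{2m} = \mathbf{1}_{2m}$ closes the proof.

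As a sanity check, the column-sum property can also be confirmed combinatorially. For a fixed column indexed by $f=[j,k]$, the nonzero contributions come from edges $e$ ending at $j$ with $e\ne f^{-1}$; there are exactly $d_j-1$ such edges, and each contributes $1/(d_j-1)$ because $d_e=d_j-1$ is uniform over this set. Either route is essentially immediate, so I do not anticipate any genuine obstacle; the only thing worth flagging is that PT-invariance packages the row- and column-sum conditions into one another once one notices that $\V$ fixes $\mathbf{1}_{2m}$.
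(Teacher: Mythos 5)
Your proof is correct, and it is essentially the same argument as the paper's: both reduce the column-stochastic condition to the already-known row-stochastic condition via the structural symmetry of $\B$. The paper writes the transpose as $\mathcal{T}^T = \B^T \mathcal{D}_{row}^{-1} = \mathcal{D}_{col}^{-1}\B^T$ (Equation~\eqref{fordit}) and reads off row-stochasticity of that matrix directly, while you phrase the same fact as the PT-invariance relation $\mathcal{T}^T = \V\mathcal{T}\V$ and exploit $\V\1_{2m}=\1_{2m}$; these are equivalent, since $\V\mathcal{T}\V = (\V\mathcal{D}_{row}^{-1}\V)(\V\B\V) = \mathcal{D}_{col}^{-1}\B^T$. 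Your combinatorial sanity check (all $e$ feeding into a fixed $f=[j,k]$ share the denominator $d_j-1$ and there are exactly $d_j-1$ of them) is a genuinely independent elementary verification and would serve as a self-contained proof if one preferred not to invoke PT-invariance at all.
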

\begin{proof}
$\cal T$ is clearly a stochastic matrix as
its row-sums are 1's. For the same reason, its transpose 
\begin{equation}\label{fordit}
({\cal D}_{row}^{-1} \B )^T = \B^T {\cal D}_{row}^{-1} ={\cal D}_{col}^{-1} \B^T
\end{equation}
is also a stochastic matrix (it is the transition probability matrix of the
reversed non-backtracking random walk along to the inverses of the
oriented edges).
\end{proof}

Consequently, the largest modulus real eigenvalue of both $\cal T$ and
${\cal T}^T$ is 1
with eigenvector $\1$. This also means that the stationary distribution
of the corresponding Markov chain is uniform, see~\cite{Lovasz}.
Further, the uniform one is the unique stationary distribution if and only if
the Markov chain is ergodic. The necessary and sufficient condition for
ergodicity is the irreducibility ($\lambda_2>0$) and aperiodicity
($\lambda_{2m}>-1$). These hold whenever $\cal G$ is connected and not
bipartite; equivalently, $G$ is such and not the
cycle graph. 

The left and right eigenvectors, corresponding to the (same) real
eigenvalues of $\cal T$, form a biorthogonal
system as the matrix $\cal T$ is diagonalizable.
Summarizing, 
1 is a single real eigenvalue of the irreducible matrix
${\cal D}_{row}^{-1} \B$  of
nonnegative entries, by the Frobenius theorem; also, the moduli of the other
(possibly complex) eigenvalues are at most 1. 
A right eigenvector $\z$ with eigenvalue $\lambda$ of $\cal T$ satisfies
the equation
\begin{equation}\label{z}
{\cal D}_{row}^{-1} \B \z =\lambda \z .
\end{equation}

We will use the special structure of $\B$ when we consider a  real
eigenvalue $\lambda$ with corresponding right eigenvector $\z$ of $\cal T$.
By $\V^2 =\I_{2m}$, $\V^T =\V$ of Equation~\eqref{ve},
Equation~\eqref{z} is equivalent to
$$
 (\V {\cal D}_{row}^{-1} \V ) (\V \B \V ) (\V \z ) =\lambda (\V \z ) ,
 $$
 so  ${\cal D}_{col}^{-1} \B^T {\breve \z } = \lambda {\breve \z }$.
  Consequently, if $\z$ is a right eigenvector of ${\cal D}_{row}^{-1} \B$
  with the real eigenvalue
  $\lambda$, then $\breve \z $ is a right  eigenvector of
  ${\cal D}_{col}^{-1} \B^T$, with the same real eigenvalue $\lambda$;
  and, by Equation~\eqref{fordit},  it is also a left eigenvector of
  ${\cal D}_{row}^{-1} \B$ with the same  eigenvalue.
  
 \begin{proposition}  
  If $\z$ is a right eigenvector of $\cal T$
  with eigenvalue $\lambda \in \CC$, then  $\bar {\breve \z}$ is
  a left eigenvector of $\cal T$ with the same eigenvalue.
\end{proposition}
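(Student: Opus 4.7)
The plan is to adapt the calculation already carried out for real $\lambda$ in Equations~\eqref{z}--\eqref{brevez}, but keep track of complex conjugation since we now allow $\lambda \in \CC \setminus \R$.

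First, I start from ${\cal T}\z = \lambda \z$ and apply entrywise complex conjugation to both sides. Because $\cal T$ has real entries, $\overline{\cal T} = {\cal T}$, so the equation becomes
$$
{\cal T}\bar \z = \bar \lambda \,\bar \z.
$$
So $\bar\z$ is already a right eigenvector of $\cal T$, but for the conjugate eigenvalue $\bar\lambda$; it remains to convert this into a left-eigenvector statement with eigenvalue $\lambda$.

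Next, I invoke the PT-symmetry ${\cal T}^* = \V {\cal T} \V$ recorded in the paragraph immediately before this section. Pre-multiplying the previous display by $\V$ and inserting $\V^2 = \I_{2m}$ turns it into ${\cal T}^* (\V \bar \z) = \bar\lambda (\V \bar \z)$. Because $\V$ is a real permutation matrix, coordinate swapping and entrywise conjugation commute, so $\V \bar \z = \overline{\V \z} = \bar{\breve \z}$. Hence
$$
{\cal T}^* \bar{\breve \z} = \bar \lambda \,\bar{\breve \z},
$$
which by the definition of left/right eigenvectors is exactly the statement that $\bar{\breve \z}$ is a left eigenvector of $\cal T$ associated with the eigenvalue $\lambda$.

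I do not foresee a genuine obstacle here: the argument is a short algebraic manipulation using PT-symmetry, essentially the complex version of the three-line derivation that produced Equation~\eqref{brevez}. The only point requiring some care is the bookkeeping of conjugates: translating ``left eigenvector of $\cal T$ with eigenvalue $\lambda$'' into an equation for ${\cal T}^*$ introduces $\bar\lambda$, and this is precisely what the complex conjugation in the first step produces, so the two conjugations cancel consistently and nothing further is needed.
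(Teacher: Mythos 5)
Your proof is correct and follows essentially the same route as the paper's: both rest on the $\V$-conjugation (PT) symmetry and on the fact that $\cal T$ has real entries. The only stylistic difference is that you invoke the pre-packaged identity ${\cal T}^* = \V {\cal T} \V$ directly, whereas the paper unfolds $\cal T$ into ${\cal D}_{row}^{-1}\B$ and applies the $\V$-identities to each factor; your version is a touch shorter but mathematically identical.
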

\begin{proof}
  Indeed,
  $$
  \begin{aligned}
  {\cal T}^* {\bar {\breve \z}} & ={\cal T}^T {\bar {\breve \z}} =
  {\cal D}_{col}^{-1} \B^T {\bar {\breve \z}} =
  (\V {\cal D}_{row}^{-1} \V ) (\V \B \V )  {\breve {\bar \z}}=
  \V ({\cal D}_{row}^{-1} \B {\bar \z}) \\
  & =\V \overline{{\cal D}_{row}^{-1} \B \z } = \V \overline {\lambda \z }  =
  \V {\bar \lambda}{\bar \z} = {\bar \lambda} (\V {\bar \z } ) =
  {\bar \lambda} {\breve {\bar \z} } =  {\bar \lambda} {\bar {\breve \z} } ,
  \end{aligned}
  $$
  so $\bar {\breve \z}$ is a right eigenvector of ${\cal T}^*={\cal T}^T$
  with eigenvalue  $\bar \lambda$, 
  which means that  $\bar {\breve \z}$ is a left
  eigenvector of $\cal T$ with eigenvalue $\lambda$.
\end{proof}

We can summarize the eigen-structures of $\cal T$ and ${\cal T}^T $ in Tables~\ref{tab1} and~\ref{tab2}.

Consequently, for complex (but not real) $\lambda$,
    the right eigenvector corresponding to
    $\lambda$ is orthogonal to the left eigenvector corresponding to
    $\bar \lambda $ and vice versa,
    the left eigenvector corresponding to
    $\lambda$ is orthogonal to the right eigenvector corresponding to
    $\bar \lambda$. That is,
    $$
    \z^* {\breve \z} =  0 \quad \textrm{and} \quad
    {\bar \z}^* {\breve{\bar \z}} = \z^T {\bar {\breve \z}} =0.
    $$  
    This follows by the biorthogonality of the right and left
    eigenvectors.
(Orthogonality holds for right-left ones corresponding to two different
eigenvalues; in particulr, for not real $\lambda$,
$\lambda \ne {\bar \lambda}$.)
With entrywise calculations, this is also proved in~\cite{Jost,Mulas},
akin to the following proposition. We cite it without proof.

\begin{table}[h]
	\centering
	\begin{tabular}{lcc}
		\hline
		${\cal T}$: \textrm{ eigenvalue} & $\lambda \in \mathbb{R}$ & $\lambda \in \mathbb{C},\ \overline{\lambda} \ne \lambda$ \\
		\hline
		right eigenvector & $\mathbf{z} \in \mathbb{R}^{2m}$ & 
		$\mathbf{z} \in \mathbb{C}^{2m},\ \bar{\mathbf{z}} \in \mathbb{C}^{2m}$ \\
		
		left eigenvector & $c\,\breve{\mathbf{z}} \in \mathbb{R}^{2m}$ &
		$c\,\breve{\bar{\mathbf{z}}}=c\,\bar{\breve{\mathbf{z}}},\ 
		\bar{c}\,\breve{\mathbf{z}} \in \mathbb{C}^{2m}$ \\
		
		& $(c \in \mathbb{R})$ & $(c \in \mathbb{C})$ \\
		\hline
	\end{tabular}
	\caption{Eigenvalues and eigenvectors of the matrix $\cal T$}\label{tab1}
\end{table}

\begin{table}[h]
	\centering
	\begin{tabular}{lcc}
          \hline
          ${\cal T}^T$: \textrm{ eigenvalue} & $\lambda \in \mathbb{R}$
  & $\overline{\lambda } \in \mathbb{C},\ \lambda \ne \overline{\lambda}$ \\
		\hline
		right eigenvector & $c \breve{\z} \in \mathbb{R}^{2m}$ & 
		$c \breve{\bar \z} =c\bar{\breve \z}, \quad {\bar c} \breve{\z } \in \CC^{2m}$ \\		
		left eigenvector & $\z \in \R^{2m}$ &
		$\z , \quad \quad \quad \quad {\bar \z} \in \CC^{2m}$ \\		
		& $(c \in \mathbb{R})$ & $(c \in \mathbb{C})$ \\
		\hline
	\end{tabular}
	\caption{Eigenvalues and eigenvectors of the matrix ${\cal T}^T$}\label{tab2}
\end{table}

\begin{proposition}\label{zero} 
  For the coordinates of any eigenvector $\z$ of $\cal T$
  that does not correspond to
   the trivial eigenvalue 1, the relation  $\sum_{j=1}^{2m} z_j =0$ holds.
\end{proposition}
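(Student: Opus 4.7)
The plan is to derive the statement directly from the doubly stochastic nature of $\cal T$, which was already established in the earlier proposition, together with the biorthogonality of left and right eigenvectors corresponding to distinct eigenvalues.

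First I would note that since $\cal T$ is doubly stochastic, the all-ones vector $\1 \in \R^{2m}$ is simultaneously a right eigenvector of $\cal T$ (because the row sums are $1$, so ${\cal T}\1 = \1$) and a left eigenvector of $\cal T$ (because the column sums are $1$, so $\1^T {\cal T} = \1^T$), both with eigenvalue $1$. In particular, $\1$ serves as the left eigenvector associated to the trivial eigenvalue $\lambda = 1$.

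Next I would invoke the biorthogonality relation $\vvv_i^* \uuu_j = \delta_{ij}$ from Definition~\ref{lr}, which applies because $\B$ (and hence $\cal T$) is assumed diagonalizable. If $\z$ is a right eigenvector of $\cal T$ with eigenvalue $\lambda \neq 1$, then $\z$ is biorthogonal to the left eigenvector corresponding to $\lambda = 1$, namely $\1$. Hence
\[
\1^T \z \;=\; \sum_{j=1}^{2m} z_j \;=\; 0 ,
\]
which is the claim. For complex $\lambda$, the same conclusion follows from $\1^T \z = 0$ since $\1$ has real entries and the biorthogonality $\1^* \z = 0$ coincides with $\1^T \z = 0$.

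There is no substantive obstacle here; the only subtlety is making sure one applies biorthogonality to the correct pairing. A cleaner alternative would be to act with $\1^T$ on both sides of the eigen-equation ${\cal T}\z = \lambda \z$: using $\1^T {\cal T} = \1^T$ one obtains $\1^T \z = \lambda \, \1^T \z$, and since $\lambda \neq 1$ this forces $\1^T \z = 0$. This latter argument is self-contained and avoids the diagonalizability hypothesis altogether, working for every eigenvector (right or generalized) outside the eigenspace of $\lambda = 1$.
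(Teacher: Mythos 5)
Your proof is correct. The paper itself does not prove Proposition~\ref{zero}; it states that the result is established in the cited references ``with entrywise calculations'' and then says ``We cite it without proof.'' Your argument is cleaner and more conceptual: it exploits only the fact, already established in the paper's earlier proposition, that $\cal T$ is doubly stochastic, so that $\1^T{\cal T}=\1^T$. Multiplying ${\cal T}\z=\lambda\z$ on the left by $\1^T$ gives $(1-\lambda)\1^T\z=0$, hence $\1^T\z=0$ whenever $\lambda\ne1$. This is exactly the standard orthogonality of left and right eigenvectors with distinct eigenvalues, and your closing observation is right that it needs neither diagonalizability nor any real/complex case split. The one minor imprecision is in the first version of your argument, where you say the biorthogonality $\vvv_i^*\uuu_j=\delta_{ij}$ ``applies because $\B$ (and hence $\cal T$) is assumed diagonalizable'': the off-diagonal vanishing $\vvv_i^*\uuu_j=0$ for $\lambda_i\ne\lambda_j$ holds unconditionally (and is all you use), whereas only the normalization $\vvv_i^*\uuu_i=1$ relies on semisimplicity. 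Your second, self-contained derivation is preferable and is in fact more general than the framing the paper borrows from the references, since it immediately extends to all generalized eigenvectors outside the $\lambda=1$ root subspace.
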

This  means that $\z \perp \1$ (where $\1$ is the eigenvector corresponding to
  the eigenvalue 1; it is single if the graph is connected). However,
  the eigevectors of $\cal T$ are not usually orthogonal.
  Now we shall prove a more general statement which, in particular, implies
  Proposition~\ref{zero}. Namely,
  the coordinates of the eigenvectors of $\cal T$,
 corresponding to its (not-trivial) real eigenvalues,
 which belong to edges with the same end-point, also sum to 0.
 This happens because the corresponding (real) eigenvectors are within a
 special subspace of $\R^{2m}$, described as follows.

  To ease the discussion, two auxiliary matrices,
  defined, e.g., in~\cite{Bolla25} will be used:
  the $2m\times n$ \textit{end matrix} $\bm{End}$ has entries
  $end_{ei} =1$ if $i$ is the end-vertex of the (directed) edge $e$ and 0,
  otherwise;
  the $2m\times n$ \textit{start matrix} $\bm{Start}$ has entries
  $start_{ei} =1$ if $i$ is the start-vertex of the (directed) edge $e$ and 0,
  otherwise. Then for any vector $\uuu \in \R^n$ and for any edge
  $e=\{ i\to j \}$, the following holds:
  $$
  ( \bm{End}\, \uuu )_e = u_j \quad \textrm{and} \quad
  ( \bm{Start}\, \uuu )_e =u_i .
  $$
  Consequently,  $\bm{End} \, \uuu$ is the $2m$-dimensional inflated version
  of the $n$-dimensional vector $\uuu$, where the coordinate $u_j$ of $\uuu$
  is repeated as many times, as many edges have end-vertex $j$; likewise,
 in the $2m$-dimensional inflated vector $\bm{Start} \, \uuu$, the coordinate 
  $u_i$ of $\uuu$
  is repeated as many times, as many edges have start-vertex $i$.
  As each edge is considered in both possible directions,
  these numbers are the vertex-degrees $d_j$ and $d_i$, respectively.
 Note that 
$$
\bm{End}^* \, \bm{End}=\bm{Start}^* \, \bm{Start} =
 \diag (d_1 ,\dots ,d_n ) =\DD .
$$

 \begin{proposition}\label{apropos}
 The eigenvectors of the symmetric matrix $\B \V =
 \bm{End} \, \bm{End}^T -\I_{2m}$, corresponding to the eigenvalues
 $d_j -1$ $(j=1,\dots ,n )$ are the column vectors of $\bm{End}$. The other
 eigenvectors corresponding to the eigenvalue -1 (of multiplicity $2m-n$)
 form an arbitrary orthonormal system in the $(2m-n)$-dimensional
 real subspace, orthogonal to these $n$ column-vectors. Therefore, a vector
 $\y$ within this subspace is characterized by the following equations:
 \begin{equation}\label{subspace}
 \sum_{e:\, out(e) =j} y_e =0 , \quad j=1,\dots ,n .
 \end{equation}
\end{proposition}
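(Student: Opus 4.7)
The plan is to first verify the matrix identity $\B\V = \bm{End}\,\bm{End}^T - \I_{2m}$ by an entrywise calculation, and then read off the spectrum by relating $\bm{End}\,\bm{End}^T$ to the much simpler matrix $\bm{End}^T\bm{End}=\DD$.

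For the identity, I would compute $(\B\V)_{ef} = \sum_g b_{eg} V_{gf} = b_{e,\,f^{-1}}$, since $V_{gf}=1$ exactly when $g=f^{-1}$. By the definition of $\B$, this equals $1$ iff $\mathrm{out}(e)=\mathrm{in}(f^{-1})=\mathrm{out}(f)$ and $f^{-1}\ne e^{-1}$, i.e.\ iff $e$ and $f$ share an end-node and $e\ne f$. On the other hand, $(\bm{End}\,\bm{End}^T)_{ef}=\sum_i end_{ei}\,end_{fi}$ equals $1$ exactly when $\mathrm{out}(e)=\mathrm{out}(f)$ (with no condition $e\ne f$), so subtracting the identity removes the diagonal and yields the claimed identity.

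Next, since $(\bm{End}^T\bm{End})_{jk}=\sum_e end_{ej}\,end_{ek}$ vanishes for $j\ne k$ (an oriented edge has a single end-node) and equals $d_j$ for $j=k$, we have $\bm{End}^T\bm{End}=\DD$. Let $\bm{e}_j$ be the $j$-th standard basis vector in $\R^n$; then the $j$-th column of $\bm{End}$ is $\bm{End}\,\bm{e}_j$, and
$$
\bm{End}\,\bm{End}^T(\bm{End}\,\bm{e}_j)=\bm{End}\,(\bm{End}^T\bm{End})\,\bm{e}_j=\bm{End}\,\DD\,\bm{e}_j=d_j\,\bm{End}\,\bm{e}_j.
$$
Thus each column of $\bm{End}$ is an eigenvector of $\bm{End}\,\bm{End}^T$ with eigenvalue $d_j$, and hence of $\B\V$ with eigenvalue $d_j-1$. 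The columns are pairwise orthogonal (and nonzero) because distinct nodes cannot be the end-node of the same oriented edge, so they span an $n$-dimensional invariant subspace.

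Finally, because $\bm{End}$ has rank $n$, the matrix $\bm{End}\,\bm{End}^T$ has rank $n$ and its kernel has dimension $2m-n$. A vector $\y$ lies in this kernel iff $\bm{End}^T\y=\0$ (use $\|\bm{End}^T\y\|^2=\y^T\bm{End}\,\bm{End}^T\y$), and coordinate $j$ of $\bm{End}^T\y$ is precisely $\sum_{e:\,\mathrm{out}(e)=j} y_e$, giving condition \eqref{subspace}. For such $\y$, $\B\V\y=\bm{End}\,\bm{End}^T\y-\y=-\y$, so the orthogonal complement of the column span of $\bm{End}$ is exactly the eigenspace for $-1$, of multiplicity $2m-n$. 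The only mildly delicate point is the entrywise verification of the matrix identity, where one must carefully distinguish the two constraints ``shared endpoint'' and ``$f\ne e^{-1}$'' and see that together they coincide with the off-diagonal pattern of $\bm{End}\,\bm{End}^T$; everything after that is linear algebra on a symmetric rank-$n$ perturbation of $-\I_{2m}$.
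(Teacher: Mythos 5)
Your proof is correct and takes essentially the same approach as the paper's: the paper simply asserts that the identity $\bm{End}\,\bm{End}^T=\I_{2m}+\B\V$ is trivial and that the columns of $\bm{End}$ "can easily be checked" to be eigenvectors of the Gramian with the column sums $d_j$ as eigenvalues, whereas you supply the entrywise verification of the identity and the $\bm{End}^T\bm{End}=\DD$ computation that make those assertions rigorous. The characterization of the $-1$-eigenspace as $\ker(\bm{End}^T)$ via the Gramian is likewise the same observation the paper makes implicitly.
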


\begin{proof}
  The relation $\bm{End} \, \bm{End}^T= \I_{2m} + \B \V$ is trivial.
  Therefore, the eigenvalues of $\bm{End} \, \bm{End}^T$ are
  1+the eigenvalues of $\B \V$, with the same eigenvectors. These are the
 vertex-degrees (of the original graph, with possible multiplicities) and
 0 with multiplicity $2m -n$. Indeed, $\bm{End} \, \bm{End}^T$ is a positive
 semidefinite Gramian of rank $n$. It can easily be checked that the
 column-vectors of $\bm{End }$ are its 
 $n$ linearly independent eigenvectors, with the column-sums (that are the
 vertex-degrees) as eigenvalues; the other eigenvalues are zeros.
 
 These give $n$ eigenvalues $d_j -1$ $(j=1,\dots ,n)$ of $\B\V$ and the other
 multiple eigenvalue is $0-1=-1$ with multiplicity $2m-n$.
 Because a general vector within this subspace is orthogonal to each of the $n$
 column-vectors of $\bm{End}$, Equation~\eqref{subspace} is valid
 for them. This finishes the proof.
 \end{proof}
Observe that  
the symmetric matrices $\B^T \V = \V \B \V \V =\V \B$ and $\B\V$ have the same
eigenvalues and their eigenvectors are swappings of each other.
Also,  $\bm{Start} \, \bm{Start}^T = \I_{2m} + \B^T \V$, so similar
arguments as in Proposition~\ref{apropos} hold for them.
In particular, the eigenvectors $\y$'s of $\B^T \V$,
corresponding to the eigenvalue -1 (of multiplicity $2m-n$) are in the
subspace characterized by the equations
\begin{equation}\label{subspace1}
 \sum_{e:\, in(e) =j} y_e =0 , \quad j=1,\dots ,n .
\end{equation}

\begin{proposition}\label{sub}
 The eigenvectors of $\cal T$, corresponding to its (non-trivial)
 real eigenvalues are within the subspace~\eqref{subspace}.
\end{proposition}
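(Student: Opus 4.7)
The plan is to exploit the factorization $\B = \bm{End}\,\bm{Start}^T - \V$ (which records ``all edge transitions minus backtrackings'') to compress the $2m$-dimensional eigenvalue problem for ${\cal T}$ into an $n$-dimensional identity on $\uuu := \bm{End}^T\z$, and then to force $\uuu=\0$ by combining the reduction with Proposition~\ref{zero} and the irreducibility of $\B$.

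First I would apply $\bm{End}^T$ to $\B\z = \lambda\,{\cal D}_{row}\z$. The identities $\bm{End}^T\B = \tilde\DD\,\bm{Start}^T$ and $\bm{End}^T\,{\cal D}_{row} = \tilde\DD\,\bm{End}^T$, with $\tilde\DD := \diag(d_j-1)$ invertible by $d_{\min}\ge 2$, both follow immediately from the edge-counting definitions of $\bm{End}$ and $\bm{Start}$; after cancelling $\tilde\DD$ they produce the basic identity $\bm{Start}^T\z = \lambda\,\bm{End}^T\z$, so $\vvv := \bm{Start}^T\z = \lambda\uuu$. Substituting $\B = \bm{End}\,\bm{Start}^T - \V$ into the eigenvalue equation and using this identity then gives the vector equation $\lambda\,\bm{End}\,\uuu = \lambda\,{\cal D}_{row}\z + \breve\z$, whose components at $e=[i,j]$ and $e^{-1}=[j,i]$ are the local $2\times 2$ system
\begin{align*}
z_{[j,i]} + \lambda(d_j-1)z_{[i,j]} &= \lambda u_j,\\
z_{[i,j]} + \lambda(d_i-1)z_{[j,i]} &= \lambda u_i.
\end{align*}

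In the generic case the determinant $1 - \lambda^2(d_i-1)(d_j-1)$ is nonzero, and one solves the local system for each $z_{[i,j]}$ as a rational expression in $u_i, u_j$. Substituting back into the defining relation $u_j = \sum_{i\sim j}z_{[i,j]}$ then collapses the problem to an $n$-dimensional spectral equation of the form $\M(\lambda)\uuu = \0$; in the $d$-regular case it simplifies to $\A\uuu = \mu(\lambda)\uuu$ with $\mu(\lambda) = \lambda^{-1}+\lambda(d-1)$, recovering the familiar Ihara--Bass link between the spectra of $\cal T$ and $\A$. Combining this reduced equation with the orthogonality $\1^T\uuu = 0$ supplied by Proposition~\ref{zero}, and using the simplicity of the principal eigenvector of the reduction (which by Perron--Frobenius corresponds to $\lambda=1$), then forces $\uuu = \0$; by Proposition~\ref{apropos} this is precisely the membership condition~\eqref{subspace}.

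The main obstacle will be the final step, ruling out a nonzero $\uuu\perp\1$ satisfying $\M(\lambda)\uuu = \0$ for every non-trivial real $\lambda$. The degenerate case where the local determinant $1-\lambda^2(d_i-1)(d_j-1)$ vanishes for some edge---corresponding on a regular graph to eigenvalues coming from the $(\mu^2-1)^{m-n}$ factor of the Ihara--Bass polynomial---is not captured by the generic rational formula and must be argued directly from the $2\times 2$ system, whose constraints simplify dramatically once combined with the zero-sum condition of Proposition~\ref{zero}.
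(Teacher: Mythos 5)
Your derivation of $\vvv := \bm{Start}^T\z = \lambda\uuu$ (with $\uuu := \bm{End}^T\z$) via the identity $\bm{End}^T\B=\diag(d_j-1)\,\bm{Start}^T$, and of the local $2\times 2$ system, is correct; the route through $\B=\bm{End}\,\bm{Start}^T-\V$ is genuinely different from the paper's direct coordinate manipulation. The gap is the final step. From $\A\uuu=\mu(\lambda)\uuu$ (in the $d$-regular case), the constraint $\1^T\uuu=0$ together with simplicity of the Perron vector only removes the component of $\uuu$ along $\1$; it does \emph{not} force $\uuu=\0$. Whenever $\mu(\lambda)$ equals a non-principal adjacency eigenvalue, a nonzero $\uuu\perp\1$ exists and the corresponding $\z$ is \emph{not} in~\eqref{subspace}. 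Your own reduction produces such an example: in a bipartite $d$-regular graph with $d\ge 3$ (say the $3$-cube), $\lambda=-1$ is a non-trivial real eigenvalue of ${\cal T}$ whose eigenvector has $z_e=\pm 1$ according to the bipartition class of $out(e)$, so $u_j=\sum_{e:\, out(e)=j} z_e = \pm d \ne 0$. The same occurs for every real $\lambda$ arising from an adjacency eigenvalue $\alpha$ with $2\sqrt{d-1}<|\alpha|<d$ --- precisely the structural eigenvalues used for clustering --- so the statement requires a restriction (roughly, to the real $\lambda$'s from the $(1-u^2)^{m-n}$ factor of the Ihara--Bass polynomial, where your local determinant degenerates).

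For completeness, the paper's own proof has the same hole behind an index slip: the displayed identity $(\B\z)_f=\sum_{e:\, out(e)=out(f),\, in(e)\ne in(f)}z_e$ is the $f$-th coordinate of $\B\V\z$, not of $\B\z$. Using the correct $(\B\z)_f=\sum_{e:\, in(e)=out(f),\, out(e)\ne in(f)}z_e$, equation~\eqref{Teigen} yields $(d_j-1)v_j=\lambda(d_j-1)u_j$, i.e.\ your relation $\vvv=\lambda\uuu$, and one cannot divide by $1-\lambda$ to obtain $u_j=0$. So your first step already reproduces the corrected version of the paper's computation; what is missing in both arguments is a reason why $\uuu$ cannot be a nonzero eigenvector of the reduced $n\times n$ problem, and the bipartite example shows that no such reason exists without additional hypotheses.
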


\begin{proof}
 Indeed, by equation~\eqref{z}, 
 $\B \z =\lambda {\cal D}_{row} \z$, 
 where $\lambda \in \R$, and so, the coordinates of $\z$ are real numbers too.
 This means that for each $j\in \{ 1,\dots ,n \}$:
 \begin{equation}\label{Teigen}
 \sum_{f:\, out(f) =j} (\B \z)_f =\lambda (d_j -1 ) \sum_{e:\, out(e) =j} z_e .
 \end{equation}
 Taking into consideration the 0/1 structure of $\B$,
 $$
 (\B \z)_f = \sum_{e: \, out(e) =out(f), \, in (e) \ne in(f)} z_e
 $$
 and so,
 $$
 \sum_{f:\, out(f) =j} (\B \z)_f = \sum_{f:\, out(f) =j;}
 \sum_{e: \, out(e) =j, \, in (e) \ne in(f)} z_e =
 (d_j -1 ) \sum_{e:\, out(e) =j} z_e .
 $$
 Substituting into~\eqref{Teigen}, since $\lambda \ne 1$, this can hold only if
 $\sum_{e:\, out(e) =j} z_e =0$. This holds true for any $j=1,\dots ,n$.
\end{proof}
 
Similar result  may also hold for the real eigenvectors of $\B$,
unless the corresponding
eigenvalue is $d_j -1$ for each $j$, which excludes the regular graphs.
 Consequently, the number of the real eigenvalues of $\cal T$ is maximum $2m-n$
 and  the number of the real eigenvalues of $\B$ is maximum $2m-n$ too, but
 we know (see the relation to the eigenvalues of the $2n\times 2n$
 matrix $\K$ in~\cite{Bolla25}) that it is minimum $2m-2n$.
In Section~\ref{rel},
 we will show that in the special sparse multicluster model, the real
 eigenvalues of $\cal T$ are close to the (same) scalar multiples of those
 of $\B$.

 Proposition~\ref{apropos} also implies that
 $\B^T$ and $\B^{-1}$ have the same effect on
 the vectors of the above subspaces~\eqref{subspace} and~\eqref{subspace1},
 as it is illustrated now via their SVD's.
   By~\cite{Bordenave,Jost}, the spectral decomposition of the symmetric matrix
$\B^T \V$ is $\sum_{j=1}^{2m} \sigma_j \x_j \x_j^T$, where the eigenvectors
$\x_j$'s form an orthonormal basis (and they have real coordinates, as the
matrix is real symmetric, and the eigenvalues $\sigma_j$'s are all real).
It was shown (see~Proposition~\ref{apropos})
that the eigenvalues are the numbers $d_j -1$ (with multiplicity
$d_j$), for $j=1,\dots ,n$, with eigenvectors $\x_{2m-n+1}, \dots ,\x_{2m}$;
further, -1 (with multiplicity $2m -n$), i.e.,
the spectral decomposition of $\B^T \V$ is
$$
 \B^T \V = -\sum_{j=1}^{2m-n} \x_j \x_j^T + \sum_{j=2m-n+1}^{2m}
 (d_j -1 )  {\x}_j \x_j^T .
 $$
 Consequently,
 $$
 \B^T = -\sum_{j=1}^{2m-n} \x_j (\V \x_j)^T + \sum_{j=2m-n+1}^{2m}
 (d_j -1 )  {\x}_j (\V \x_j)^T .
 $$
Note that any linear combination of $\x_1 ,\dots ,\x_{2m-n}$ is an
 eigenvector of $\B^T \V$ with eigenvalue -1, only their subspace,
 which is just~\eqref{subspace1}, is unique.
 
This implies the SVD of $\B^T$ which is
$\B^T = \sum_{j=1}^{2m} s_j \x_j \y_j^T $, where $s_j =|\sigma_j |$ and
$\y_j = \textrm{sign} (\sigma_j ) {\breve \x}_j $. Therefore,  
the SVD of $\B^T$ is 
$$
\B^T =-\sum_{j=1}^{2m-n} \x_j {\breve{\x}_j  }^T + \sum_{j=2m-n+1}^{2m}
(d_j -1 )  \x_j {\breve{\x}_j  }^T 
$$
and the SVD of $\B$ is
\begin{equation}\label{B}
 \B =-\sum_{j=1}^{2m-n} {\breve \x}_j \x_j^T + \sum_{j=2m-n+1}^{2m}
 (d_j -1 )  {\breve \x}_j \x_j^T .  
\end{equation}
From this, with simple linear algebra, we get that the SVD of $\B^{-1}$ is  
$$
 \B^{-1} =-\sum_{j=1}^{2m-n} {\x}_j {\breve \x}_j^T + \sum_{j=2m-n+1}^{2m}
\frac1{d_j -1 }  {\x}_j {\breve \x}_j^T ,  
$$
provided each $d_j \ge 2$.

\begin{remark}\label{remi}
  As by Proposition~\ref{sub},
  the vectors $\x_1 ,\dots ,\x_{2m-n}$ span the subspace characterized
by~\eqref{subspace}, the effect of $\B^T$ and $\B^{-1}$ is the same for
vectors (namely for eigenvectors of $\cal T$ corresponding to real
eigenvalues) within this subspace.
\end{remark}

\begin{proof} (of Theorem~\ref{th1})
  The statement about the allocation of the eigenvalues is well known,
  see~\cite{Bauer,Jost}.
  
  The right eigenvalue--eigenvector equation~\eqref{z} for
  the real eigenvalues of $\cal T$ is
  equivalent to the problem of finding the generalized real eigenvalues of the
  matrices
  $\B$ and ${\cal D}_{row}$. Indeed, by Equation~\eqref{z},
 \begin{equation}\label{gen}
  \B \z_i = \lambda_i {\cal D}_{row} \z_i , \quad i=1,\dots k .
\end{equation}
 Adapting the theory of the generalized eigenvalue problem
 (see, e.g.~\cite{Rozsa}),
 since ${\cal D}_{row}$ is positive definite and $\B$ is diagonalizable,
 there exists an orthonormal system of $k$ elements with real
 coordinates which
 simultaneously diagonalizes $\B$ and ${\cal D}_{row}$ within their subspace.
 As ${\cal D}_{row}$ is
 also diagonal with positive diagonal entries, we can choose number $k$ of
  ${\cal D}_{row}$-orthonormal elements: 
  $\z_i^* {\cal D}_{row} \z_j =\z_i^T {\cal D}_{row} \z_j = \delta_{ij}$
  for $i,j=1,\dots ,k$.

  With the notation $\Z_k := (\z_1 ,\dots ,\z_{k} )$ and
  $\Lada_k := \diag (\lambda_1 ,\dots ,\lambda_{k} )$, Equation~\eqref{gen}
  can be condensed into the matrix form
  $$
  \B \Z_k = {\cal D}_{row} \Z_k \Lada_k 
  $$
 and
 \begin{equation}\label{ladak}
  \Z_k^T \B \Z_k = \Z_k^T {\cal D}_{row} \Z_k \Lada_k =\I_{k} \Lada_k =\Lada_k . 
 \end{equation}
 Therefore, the reduced rank congruent transformation with $\Z_k$ diagonalizes
 the bilinear form belonging to $\B$, at the
 same time. Again, here $\lambda_1 ,\dots ,\lambda_k$ are the real 
  eigenvalues of $\cal T$.

  Now consider the left eigenvectors $\w_i$'s of $\cal T$ that constitute a
  biorthonormal system with $\z_i$'s:
  $\z_i^* \w_j =\z_i^T \w_j =\delta_{ij}$ for $i,j =1,\dots ,k$.
  (In particular, $\w_1 \parallel {\breve \z_1 } \parallel \1$ will do,
  which is the scalar multiple of the all 1's  vector $\1$).
  In this way, we have the system of equations 
 $$
  {\cal D}_{col}^{-1} \B^T \w_i ={\lambda}_i \w_i , \quad i=1,\dots ,k , 
  $$
  because a left eigenvector of ${\cal D}_{row}^{-1} \B$ is a right eigenvector
  of its adjoint (transpose as real) ${\cal D}_{col}^{-1} \B^T$ with the
  same real eigenvalue $\lambda_i$.
    This is exactly the problem of finding the generalized eigenvalues of the
  matrices
  $\B^T$ and ${\cal D}_{col}$. Indeed,
 $$
  \B^T \w_i = {\lambda}_i {\cal D}_{col} \w_i ,\quad i=1,\dots ,k.
 $$
We know that ${\breve \Z}_k^T {\cal D}_{col} {\breve \Z}_k =\I_k$ and
  $\W_k = {\breve \Z}_k \C_k$, where  $\W_k = (\w_1 ,\dots ,\w_{k} )$,
  and $\C_k =\diag (c_1 ,\dots ,c_k )$
  is the diagonal matrix containing the real constant
  multipliers with which $\w_i =c_i {\breve \z}_i$, for $i=1,\dots ,k$,
  see also Table~\ref{tab2}.
So
$$
\W_k^* \B^T \W_k =\W_k^T \B^T \W_k = \C_k^T {\breve \Z}_k^T {\cal D}_{col}
{\breve \Z}_k \C_k {\bar \Lada}_k = \C_k \I_k \C_k \Lada_k
= \C_k^2 \Lada_k ,
$$
as $\C_k$  and $\Lada_k$ are diagonal matrices
with non-zero real diagonal entries.
Therefore, the reduced rank congruent transformation with $\W_k^T$
diagonalizes the bilinear form belonging to $\B^T$, for $i=1,\dots ,k$.
 
On the other hand, from~\eqref{ladak}  and by the biorthogonality, we have that 
   $$
   {\W}_k^T \B^{-1} {\W}_k =\Lada_k^{-1} ,
   $$
   so by the equality of the considered  part of the SVD in
   $\B^T$ and $\B^{-1}$ (see Remark~\ref{remi}),
   $\Lada_k^{-1} =\C_k^2 \Lada_k$, so $\C_k^2 ={\Lada_k}^{-2}$ and
   $c_i =\pm \frac1{\lambda_i}$ as it is a real number $(i=1,\dots ,k)$.

   Now we will show that $c_i =-\frac1{\lambda_i }$ is the correct choice
   for $\lambda_i >0$ and $c_i = \frac1{\lambda_i }$ for $\lambda_i <0$.
   Indeed, let $\z$ be a right eigenvector of $\cal T$
   corresponding to the real eigenvalue $\lambda$. It is partitioned into
   the $m$-dimensional vectors $\z^{(1)}$ and $\z^{(2)}$ the entries of which
   correspond to oriented edges in a certain ordering and to their inverses
   in the same ordering, respectively (akin to the labeling the rows/columns
   of $\B$). Due to Proposition~\ref{zero},  $\z^{(1)} +\z^{(2)} =\0$
   (the $m$-dimensional zero vector) and $\z^T {\breve \z} =
   2(\z^{(1)} )^T \z^{(2)} $. Therefore, if the corresponding left eigenvector is
   $\w =c {\breve \z}$, then by biorthonormality,
   $$
   1=\z^T \w = 2c (\z^{(1)} )^T \z^{(2)} ,
   $$
   and so,
   $$
   c = \frac1{ 2 (\z^{(1)} )^T \z^{(2)} } = - \frac1{ 2 \| \z^{(1)} \|^2 }
   =- \frac1{ \| \z \|^2 } <0 ,
   $$
   because of $\z^{(2)}  =- \z^{(1)}$.
   Since we saw that $c=\pm \frac1{\lambda}$, it follows that
   $\lambda \, \textrm{sign} (\lambda ) =\| \z \|^2$, which finishes the proof. 
\end{proof}
Note that in the assortative sparse stochastic block model, the real
eigenvalues of
   $\cal T$ are positive, like the real eigenvalues of $\B$
   (see Section~\ref{rel}). Therefore,
   $c=-\frac1{\lambda}$ and $\lambda = \| \z \|^2$ holds with the convenient
   normalization.
In the disassortative sparse stochastic block model, the real eigenvalues of
   $\cal T$ are negative, like the real eigenvalues of $\B$. Therefore,
   $c=\frac1{\lambda}$ and $\lambda = - \| \z \|^2$ holds with the convenient
   normalization.
   This argument also applies if there are both positive and negative
   real eigenvalues of $\cal T$.

 Observe that the section of the dyadic decomposition
  of $\cal T$, corresponding to its the real eigenvalues, is
 $$
 \sum_{i: \, \lambda_i \in \R} \lambda_i \z_i {\w}_i^T =
 \sum_{i: \, \lambda_i \in \R}  \lambda_i \z_i \left( -\frac{1}{\lambda_i} \right)
 {\breve \z}_i^T  =
   - \sum_{i: \, \lambda_i \in \R}  \z_i {\breve \z}_i^T .
 $$
   This also resembles the first part of the SVD of $\B^T$
  (we learned that $k<2m-n$).

\section{Relation between the eigenvalues of $\B$ and  $\cal T$}\label{rel}  

Our ultimate goal is to find clusters of the vertices by means of the
structural non-backtracking eigenvalues that are real ones, separated from
the bulk of the spectrum (they are positive in assortative networks).
To conclude for them, it is more convenient and customary  to consider the
non-backtracking Laplacian ($\cal L$) eigenvalues separated from 1,
or equivalently, the eigenvalues of the
transition probability matrix ($\cal T$), separated from 0. All these
eigenvalues are confined to a circle of radius 1 in the complex plane,
but we are interested only in the ``structural'' (outstanding) real ones.
For this purpose, we consider the following equivalent version of
Equation~\eqref{z}:
\begin{equation}\label{zz}
  ({\cal D}_{row}^{-1/2} \B {\cal D}_{row}^{-1/2} ) ({\cal D}_{row}^{1/2} \z ) =
  \lambda ( {\cal D}_{row}^{1/2}  \z ) .
\end{equation}

It is known (see~\cite{Bolla25,Bordenave})
that in the assortative sparse stochastic block
model, $\B$ has some ``structural'' positive real eigenvalues,
greater than $\sqrt{c}$; the largest one $\mu_1$ (guaranteed by the
Frobenius theorem) is of magnitude $c$, where $c$ is the average degree of
the original graph, see the forthcoming Section~\ref{clust}.

On the other hand, consider the structural real eigenvalues
$$
1=\lambda_1 > \lambda_2 \ge \dots \ge \lambda_k >0 
$$
of $\cal T$. We learned that to $\lambda_i$ an  eigenvector $\z_i$
of $\cal T$ corresponds ($\z_i$ also has real coordinates).
By Equation~\eqref{zz},
the matrix ${\cal D}_{row}^{-1/2} \B {\cal D}_{row}^{-1/2}$ has the same eigenvalues
with eigenvectors $\x = {\cal D}_{row}^{1/2} \z$. We know that  $\z_1 =\1$
and $\x_1 = {\cal D}_{row}^{1/2} \1$.
From Theorem~\ref{th1} we also know, that these eigenvectors form a
${\cal D}_{row}$-orthonormal system, i.e. $\z_i^T {\cal D}_{row} \z_j =
\delta_{ij}$ for $i,j=1,\dots ,k$. Consequently, the vectors
$\x_i =  {\cal D}_{row}^{1/2}  \z_i$ are orthonormal:
$\x_i^T  \x_j = \delta_{ij}$ for $i,j=1,\dots ,k$.
As for  the left eigenvectors, we know that for real $\lambda$,
$$
 \w^T  ({\cal D}_{row}^{-1} \B )  = \lambda \w^T ,
 $$
 and so,
 $$
 ({\cal D}_{row}^{-1/2} \w )^T  ({\cal D}_{row}^{-1/2} \B   {\cal D}_{row}^{-1/2})
   = \lambda \w^T  {\cal D}_{row}^{-1/2} = \lambda ({\cal D}_{row}^{-1/2} \w )^T  .
    $$   
    Hence,  the vectors
    ${\cal D}_{row}^{-1/2} \w_i =-\frac1{\lambda_i}
    {\cal D}_{row}^{-1/2}{\breve \z_i }$ 
    are left eigenvectors of ${\cal D}_{row}^{-1/2} \B   {\cal D}_{row}^{-1/2}$ 
    that form a biorthonormal system with the vectors $\x_i=
    {\cal D}_{row}^{1/2}  \z_i$, where $\lambda_1 \ge \dots \ge \lambda_k >0$
    are positive real eigenvalues of $\cal T$ in the assortative
    stochastic block model.

    Note that  the right eigenvectors
    of  ${\cal D}_{row}^{-1/2} \B {\cal D}_{row}^{-1/2}$
    are orthonormal themselves and as for the
   left ones, $\lambda_i {\cal D}_{col}^{1/2} \w_i$'s form an  orthonormal system.

For the relation between the eigenvalues of $\B$ and  $\cal T$,
 Bauer--Fike type perturbations will be used. 
 It needs the spectral condition
 number of $\U$ of a diagonalizable matrix $\U \Lada \U^{-1}$
 (see Definition~\ref{lr}), which is
  $$
  \kappa (\U ) =\| \U \| \cdot \| \U^{-1} \| =\frac{s_{max} (\U )}{s_{min} (\U )}.
  $$
Note that $\kappa(\U ) \ge 1$ and =1 if and only if $\U$ is scalar multiple of a
  unitary matrix.
 We cite a version after~\cite{BF,Bhatia,Bordenave,Stephan,Stewart}:

  \begin{proposition}
 Let $\A = \U \Lada \U^{-1}$ be diagonalizable with eigenvalues $\alpha$'s,
 and $\B$ be arbitrary  with eigenvalues $\beta$'s (both are $n\times n$
    matrices with possibly complex entries). Then for any $\beta$ there is an
    $i \in \{ 1,\dots ,n \}$ such that
    $$
      |\beta -\alpha_i | \le \kappa (\U ) \| \B - \A \| =: R.
      $$
      There can be more than one such $i$, but we can tell the following.
      Let $C_i$ be the circle centered at $\alpha_i$ with radius $R$ (in $\CC$).
      For any union of some $C_i$'s, which is disjoint of the union of the
      remaining $C_i$'s, the number of $\beta$'s within this union is equal to
      the number of $C_i$'s in the union (or equivalently,
      to the number of $\alpha$'s in
      the union). In particular, if a $C_i$ is disjoint of the other circles,
      then there is exactly one $\beta$ in it. 
   \end{proposition}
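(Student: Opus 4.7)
The plan is to split the proposition into two parts: (i) the classical Bauer--Fike inclusion bound for a single eigenvalue, and (ii) the counting/localization statement, which I would derive from (i) by a homotopy argument.

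For (i), I would fix an eigenvalue $\beta$ of $\B$ with eigenvector $\w \ne \0$. If $\beta = \alpha_i$ for some $i$, the bound is trivial. Otherwise $\A - \beta \I = \U(\Lada - \beta \I)\U^{-1}$ is invertible, and rewriting $(\B - \beta \I)\w = \0$ as $\w = -(\A - \beta \I)^{-1}(\B - \A)\w$ forces $\|(\A - \beta \I)^{-1}(\B - \A)\| \ge 1$. Submultiplicativity of the spectral norm together with the fact that $(\Lada - \beta \I)^{-1}$ is diagonal with entries $1/(\alpha_i - \beta)$, and hence has spectral norm $1/\min_i|\alpha_i - \beta|$, yields
$$
\min_i |\alpha_i - \beta| \;\le\; \|\U\|\,\|\U^{-1}\|\,\|\B - \A\| \;=\; \kappa(\U)\,\|\B - \A\| \;=\; R,
$$
which is the claimed inclusion.

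For (ii), I would introduce the linear interpolation $\A_t := \A + t(\B - \A)$ for $t\in[0,1]$ and apply (i) with $\A_t$ in place of $\B$. For every such $t$, each eigenvalue of $\A_t$ lies within distance $\kappa(\U)\cdot t\|\B - \A\| \le R$ of some $\alpha_i$, so the spectrum of $\A_t$ remains inside the union $\bigcup_i C_i$ throughout the deformation. Since the coefficients of $\det(z\I - \A_t)$ are polynomials in $t$, the $n$ eigenvalues vary continuously with $t$ as an unordered multiset. If $S$ is a union of some $C_i$'s disjoint from the union of the remaining $C_i$'s, a continuously moving eigenvalue cannot pass from $S$ to its complement without first leaving $\bigcup_i C_i$, which is forbidden. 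Consequently the number of eigenvalues (counted with algebraic multiplicity) in $S$ is constant in $t$; at $t=0$ this count equals the number of $C_i$'s making up $S$, and at $t=1$ it equals the number of $\beta$'s in $S$. The single-circle case is the specialization $S = C_i$.

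The step I expect to be the main obstacle is making the continuity-of-spectrum argument in (ii) rigorous; continuity of roots of a monic polynomial under perturbation of coefficients is true but needs to be invoked carefully when multiplicities are present. The cleanest way is a Riesz spectral-projector argument: since the two unions are compact and disjoint, one can choose a simple closed curve $\gamma$ enclosing $S$ and avoiding the remaining $C_i$'s; as $\gamma$ avoids $\bigcup_i C_i \supseteq \mathrm{spec}(\A_t)$ for every $t\in[0,1]$, the projector $P(t) := \frac{1}{2\pi i}\oint_\gamma (z\I - \A_t)^{-1}\,dz$ depends continuously on $t$, and its integer-valued rank, which equals the number of eigenvalues of $\A_t$ inside $S$, must therefore be constant in $t$. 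This invariance is what powers the counting statement.
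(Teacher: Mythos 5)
Your argument is correct, and it is essentially the textbook proof found in the sources the paper cites (the paper does not prove this proposition; it explicitly states that it is quoted after Bhatia, Bordenave, Stephan and Stewart--Sun). Part (i) is the classical Bauer--Fike bound: you correctly reduce to the case $\beta\notin\{\alpha_1,\dots,\alpha_n\}$, derive $\|(\A-\beta\I)^{-1}(\B-\A)\|\ge 1$, and exploit that $(\Lada-\beta\I)^{-1}$ is diagonal to get $\min_i|\alpha_i-\beta|\le\kappa(\U)\|\B-\A\|$. Part (ii) is the standard homotopy/continuity-of-spectrum argument, and your instinct to replace the informal ``eigenvalues move continuously'' statement with a Riesz spectral-projector argument is exactly right: since $\A_t=\A+t(\B-\A)$ has $\kappa(\U)\|\A_t-\A\|=tR\le R$, the spectrum of $\A_t$ stays inside $\bigcup_i C_i$ for all $t\in[0,1]$, the contour avoids the spectrum, and the integer-valued rank of the continuously varying projector is constant, giving the count at $t=0$ (number of $\alpha_i$'s, hence of $C_i$'s, in $S$) equal to the count at $t=1$ (number of $\beta$'s in $S$). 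One small technical point worth flagging: if $S$ consists of several disjoint components and some excluded $C_j$ sits between them, a single simple closed curve cannot enclose $S$ without also enclosing $C_j$; you should allow $\gamma$ to be a finite disjoint union of simple closed curves (a cycle), one around each connected component of $S$. With that adjustment the Riesz projector $P(t)=\frac{1}{2\pi i}\oint_\gamma(z\I-\A_t)^{-1}\,dz$ is well defined and the argument goes through unchanged.
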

   Note that the last part of the proposition resembles the Gersgorin
   theorem,
   and it shows that the $\beta$'s and $\alpha$'s cannot be too far apart
   in the complex plane.
   
   In our case, we apply this in the following situation.
     Let $\mu_1  \ge |\mu_2 | \ge \dots \ge |\mu_k | >0$ be the structural
     eigenvalues of the non-backtracking matrix $\B$ (in the sparse
     assortative stochastic $k$-cluster model);
     $\mu_1 \approx c$ and $|\mu_{k} | \approx \sqrt{c}$
     approximately, where $c$ is the average degree ($\mu_k$ may be complex).
     The other eigenvalues are within a circle of radius $\sqrt{c}$ in $\CC$,
     see~\cite{Bordenave,Stephan}, and we will use this model.
Assume that $\cal T$ is also diagonalizable and has positive real eigenvalues
$1=\lambda_1 \ge \lambda_2 \ge \dots \ge \lambda_{k} >0$. 
The base matrix is the $k$-rank approximation ${\cal T}_k$
of the transition probability 
     matrix ${\cal T} = {\cal D}_{row}^{-1} \B$ and $\frac1{\mu_1} \B$ is
     considered as the perturbed one.
     
     Then estimate $R$, which is an upper bound for the
     $|\lambda_i - \frac{\mu_i}{\mu_1}| $ differences, $i=2,\dots ,k$
     (the first difference is 0).

     $$
     \| \frac1{\mu_1} \B  - {\cal T}_k \| \le
     \|  \frac1{\mu_1} \B -{\cal T}  \| + \| {\cal T} - {\cal T}_k \|
 \le \| {\cal D}_{row}^{-1}-\frac1{\mu_1 }\I \| \cdot \| \B \|
 + |\lambda_{k+1}| ,
 $$
 where $\| \B \| =\max d_i -1$ (the maximal singular value of $\B$
 (see~\eqref{B}) and
 $$
 \| {\cal D}_{row}^{-1}-\frac1{\mu_1 }\I \| =
 \max_i | \frac1{d_i -1} -\frac1{\mu_1} |
 = \frac1{\min d_i  -1} -\frac1{\mu_1 } ,
 $$
 where we used that for diagonalizable matrices of nonnegative entries,
 like $\B$, for its Frobenius eigenvaue,
 $\min  d_i -1 \le \mu_1 \le \max d_i -1$ holds (see, e.g.,~\cite{Rozsa}).
 Actually, the first part of this inequality was used
 so that to cancel the absolute value.

Consider the spectral decomposition
$ {\cal T}_k = \Z_k \Lada_k \Z_k^{-1}$,
where the diagonal matrix $\Lada_k$ contains
$\lambda_1 ,\dots ,\lambda_k$ and zeros along its main diagonal, whereas
$\Z_k$ contains the corresponding right eigenvectors $\z_1 ,\dots ,\z_k$
in its first $k$ columns, otherwise an arbitrary (linearly independent)
set of right eigenvectors, corresponding to the eigenvalue 0 of multiplicity
$2m -k$. As discussed before, $\W_k^T :=\Z_k^{-1}$ contains the corresponding
left eigenvectors $\w_1 ,\dots \w_k$ in its first $k$ rows and a
linearly independent set of
left eigenvectors, corresponding to the multiple eigenvalue 0, in its
subsequent rows. Since ${\cal T}_k$ has all real eigenvalues ($\lambda_1 ,
\dots ,\lambda_k$ and 0's), its right  and left eigenvectors also have
real coordinates. Hence,
$$
\frac{R}{\kappa (\Z_k )}
\le \frac{\max d_i -1}{\min d_i -1} -\frac{\max d_i -1}{\mu_1} +|\lambda_{k+1}|
\le \frac{\max d_i -1}{\min d_i -1} -1 +|\lambda_{k+1}|,
$$
and by~\cite{Jost}, $|\lambda_{k+1}| \le \frac1{\sqrt{c-1}}$ (in the
assortative sparse stochastic $k$-cluster block model).
Here the last part of the inequality
 $\min  d_i -1 \le \mu_1 \le \max d_i -1$ was used.

 Now, let us consider $\kappa (\Z_k )$.  
Since  ${\cal D}_{row}^{1/2} \Z_k$ can be chosen an orthogonal matrix, 
$$
  \| \Z_k \| = \| {\cal D}_{row}^{-1/2} ({\cal D}_{row}^{1/2} {\Z_k } )  \|
  \le \| {\cal D}_{row}^{-1/2} \| \cdot \| {\cal D}_{row}^{1/2} { \Z_k } \|
  \le \frac1{(\min d_i -1 )^{1/2} } \cdot 1.
  $$
 Utilizing the relation of Theorem~\ref{th1} between the corresponding
  left and right eigenvectors, the first $k$ rows of $\W_k$ 
 contain the  swappings of the first $k$ columns of 
  $\Z_k$  multiplied with $-\frac1{\lambda_i}$'s. Therefore,
  $$
  \| \Z_k^{-1} \| =\| \W_k \| \le \max_i \frac1{|\lambda_i | }
  \frac1{(\min d_i -1 )^{1/2} }
 \le (\max_i d_i -1 ) \frac1{(\min d_i -1 )^{1/2} } , 
 $$
 as by~\cite{Mulas}, the spectral gap between the transition probability
 spectrum and zero is at least $\frac1{\max_i d_i -1}$, see also~\cite{Jost}.

A finer estimate in the sparse assortative stochastic block model  is
$$
 \| \W_k \| \le \max_{i\le k} \frac1{|\lambda_i | } \frac1{(\min d_i -1 )^{1/2} }
 \le \sqrt{c-1} \frac1{(\min d_i -1 )^{1/2} } . 
 $$
Therefore, 
 $$
  \kappa (\Z_k ) \le \frac{\max d_i -1}{\min d_i -1 }  \quad \textrm{or}
  \quad   \kappa (\Z_k ) \le \frac{\sqrt{c-1}}{\min d_i -1 } . 
 $$
  Combining the two,
  $$
  R \le  
  \frac{\max d_i -1}{\min d_i -1}   
  \left( \frac{\max d_i -1}{\min d_i -1} -1 \right) + \frac1{\min d_i -1 } .  
  $$
 The first term  is the closer to zero as $\max d_i$ is closer to 
 $\min d_i$ which is  supported by the sparse assortative stochastic
 $k$-cluster block model with the
 conditions of~\cite{Bordenave} (namely, that the clusters approximately
 have the same average degrees).
The second term is always less than 1, but decreasing with $\min d_i$. 

Summarizing, $\lambda_i$'s are close to $\frac{\mu_i}{\mu_1}$'s
$(i=1,\dots, k)$, and by~\cite{Bordenave,Decelle} it
     will guarantee that $\lambda_k$ has a constant lower bound.
 
\section{Vertex clustering}\label{clust}

Spectral clustering algorithms use the heuristic that eigenvectors corresponding
to the structural eigenvalues of a suitable matrix are applicable to the
k-means clustering. In case of the dense stochastic block model, this is
supported by Davis--Kahan type subspace perturbation theorems.
In the sparse case, with Bauer--Fike perturbations of the previous section,
similar
arguments are used for some deflated ($n$-dimensional) versions of the
$2m$-dimensional $\B$- or $\cal T$-eigenvectors.

With fixed positive integer $k$, the $k$-cluster stochastic block model,
 $SBM_k$ was considered in~\cite{Bolla25} as follows.
The $k\times k$
\textit{probability matrix} $\PP =(p_{ab})$
of the $n$-th member $G_n \in SBM_k$ of the
percolated sparse random graph sequence is
$$
 p_{ab} =\frac{c_{ab}}{n} ,
$$
where the $k \times k$
symmetric \textit{affinity matrix} $\C =(c_{ab})$ stays constant as 
$n\to\infty$. An edge between $i<j$ comes into existence,
independently of the others, with probability
$p_{ab}$ if $i\in V_a$ and $j\in V_b$, where $(V_1 ,\dots ,V_k)$ is a partition
of the node-set $V$ into $k$ disjoint clusters. Here, of course, the size of
$V$ and $V_a$'s depend on $n$, but we do not denote this dependence, akin to
the size of the adjacency matrix; however, the relative sizes
$\frac{|V_a |}{|V|}$ will be kept constant.
This will produce the upper-diagonal
part of the $n\times n$ random adjacency matrix $\A$, and $a_{ji} :=a_{ij}$.
It can be extended to the  $i=j$ case
when self-loops are allowed, or else, the diagonal entries of the
adjacency matrix are zeros.

Let $\bar \A$ denote the $n\times n$ inflated matrix of the $k\times k$ 
matrix $\PP$: ${\bar a}_{ij} =p_{ab}$ if $i\in V_a$ and $b\in V_b$.
When loops are allowed, then $\E (a_{ij} ) ={\bar a}_{ij} $ for all
$1 \le i,j \le n$.
In the loopless case, the expected adjacency matrix $\E \A$ differs from
$\bar \A$  with respect to the the main diagonal,
but the diagonal entries are negligible. 

The average degree of a real world graph on $m$ edges and $n$ vertices is
$\frac{2m}{n}$. 
The expected average 
degree of the random graph $G_n$ generated from the $SBM_k$ model is
$$
  c=  \frac1{n} \sum_{i=1}^n \sum_{j=1}^n {\bar a}_{ij} = 
  \frac1{n} \sum_{a=1}^k \sum_{b=1}^k n_a n_b p_{ab} =
\frac1{n^2} \sum_{a=1}^k \sum_{b=1}^k n_a n_b c_{ab} =
 \sum_{a=1}^k r_a  c_a ,
$$
where $c_a =\sum_{b=1}^k r_b c_{ab}$ is the average degree of cluster $a$.
It is valid only if self-loops are allowed. Otherwise, $c_a$ and $c$
should be decreased with a term of order $\frac1{n}$, but it will not make
too much difference in the subsequent calculations.
In~\cite{Bordenave}, the case when  $c_a =c$ for all $a$ is considered.
In this case $\frac1{c} {\bar \A}$ is a stochastic matrix, and so, the
spectral radius of $\bar \A$ is $c$.

In the assortative sparse stochastic $k$-cluster model, the
leading $k$ eigenvalues of $\B$ are aligned with those of $\bar \A$, and the
eigenvectors,
corresponding to the $k$ leading eigenvalues of $\B$ are
close to the inflated versions of those of $\bar \A$ (which is a matrix of
rank $k$), with an inner product approaching 1 as 
$n\to\infty$, see~\cite{Bordenave}.
Since latter ones are step-vectors, there is an estimate for
the sum of the inner variances of the clusters, where the  k-means objective 
is calculated via the convenient vertex representatives, see~\cite{Bolla13}.

For the structural (real) eigenvalues of $\B$ we have
$$
 \B \x =\mu \x ,
$$
where $\x$ is close to a step-vector in the assortative sparse
stochastic $k$-cluster block model.

On the other hand, if $\z$ is eigenvector of $\cal T$ with eigenvalue $\lambda$:
$$
({\cal D}_{row}^{-1/2} \B {\cal D}_{row}^{-1/2} ) ({\cal D}_{row}^{1/2} \z ) =
  \lambda ( {\cal D}_{row}^{1/2}  \z ) ,
$$
where the vectors ${\cal D}_{row}^{1/2}  \z_i$ $(i=1,\dots ,k)$ form an
orthonormal system.

So the structural $\lambda$'s are within a constant factor of the structural
$\mu$'s, see Section~\ref{rel}. The corresponding
eigenvectors (if the structural eigenvalues are single) are continuous
functions of the matrices. As the eigenvectors $\x_i$'s of $\B$,
corresponding to
its structural eigenvalues $\mu_i$'s are close to the inflated
versions of the eigenvectors $\uuu$'s of $\bar \A$,
they are close to step-vectors
if our graph comes from the sparse $SBM_k$ model.
Therefore, between  the $\z_i$s, as
eigenvectors of the transition probability matrix ${\cal T}={\cal D}_{row}^{-1}
\B$ and the inflated eigenvectors 
of the matrix $\DD_{\bar \A}^{-1} {\bar \A}$ 
(which are step-vectors), a similar relation holds true, as the norms of
the matrices ${\cal D}_{row}^{-1}$ and $\DD_{\bar \A}^{-1}$
do not depend on $n$.
Here the diagonal matrix $\DD_{\bar \A}$ contains entries $c_i$ in the $i$th
block for $i=1,\dots ,k$ (those are the average degrees of the clusters).

The structural  eigenvalues of ${\cal D}_{row}^{-1/2} \B {\cal D}_{row}^{-1/2}$ 
are also $\lambda_i$'s with orthonormal eigenvectors
${\cal D}_{row}^{1/2}  \z_i$'s 
and those are aligned with the structural eigenvalues of 
$\DD_{\bar \A}^{-1/2} {\bar \A} \DD_{\bar \A}^{-1/2}$. Also, the unit-norm 
eigenvectors ${\cal D}_{row}^{1/2}  \z_i$'s  are close to the inflated
versions of the unit-norm 
eigenvectors of this matrix, which are step-vectors, say 
$\vvv$'s (they form an orthonormal system as the matrix is symmetric).

Summarizing, we know that
$$
\left \|  \x - \frac{{\bm{End}} \, \uuu }{\| {\bm{End} } \, \uuu \| } 
\right \|^2 \le  2 - 2(1-\frac12 \ep ) = \ep ,
$$
where $\ep$ can be any small with increasing $n$.
Therefore,
$$
\left \|  {\cal D}_{row}^{1/2} \z - \frac{{\bm{End}} \, \vvv }{\| {\bm{End} } 
\, \vvv \| } \right \|^2 \le \ep' ,
$$
where  $\ep$ can be any small with increasing $n$, but the relation between
$\ep'$ and $\ep$ does not depend on $n$. Also, $\ep' \le \ep$ since
$\| {\cal D}_{row}^{-1} \| \le 1$ and $\| \DD_{\bar \A} \| \le 1$. 

Since $({\cal D}_{row}^{1/2} \z)^{in} = \bm{End}^* {\cal D}_{row}^{1/2} \z$ and
${\bm{End}}^*{\bm{End}} =\DD $ hold by~\cite{Bolla25},
$$ 
\left \| {\bm{End}}^* {\cal D}_{row}^{1/2} \z - {\bm{End}}^* 
\frac{{\bm{End}} \, \vvv }{\|     {\bm{End} }  \, \vvv \|} \right \|^2  =
 \left \|  ({\cal D}_{row}^{1/2}\z)^{in} - \DD \frac{\vvv }{\| {\bm{End} } \, 
\vvv \|} \right \|^2 
$$
also holds.
Consequently,
$$ 
 \left \| \DD^{-1} ( {\cal D}_{row}^{1/2} \z )^{in} - \frac{\vvv }{\| 
{\bm{End} } \, 
\vvv \|} \right \|^2 \le \| \DD^{-1} {\bm{End}}^* \|^2 \ep' \le \ep' .
$$
Indeed, the largest eigenvalue of  
$(\DD^{-1} {\bm{End}}^* ) ({\bm{End}}\,\DD^{-1} ) = \DD^{-1}\DD \DD^{-1} =\DD^{-1}$
is $\max_i \frac1{d_i}$, so the largest singular value (spectral norm) of
$\DD^{-1} {\bm{End}}^*$ is estimated from above with 
$\left( \max_i \frac1{d_i} \right)^{\frac12}$.
Therefore,
$$
 \| \DD^{-1} {\bm{End}}^* \|^2 \le \max_i \frac1{d_i} = \frac1{\min_i d_i} \le 1.
$$

Now we apply this to the $k$ leading normalized eigenvectors 
$\z_1 ,\dots ,\z_k$ of $\cal T$, for which
$$
 \sum_{j=1}^k \| \DD^{-1} ({\cal D}_{row}^{1/2}\z_j)^{in} - \frac {\vvv_j }{\| 
      {\bm End } \, \vvv_j \|} \|^2 \le k \ep' .
$$
As $\vvv_j$'s are step-vectors with $k$ different coordinates on the same
$k$ steps, the above sum of the squares estimates from above
the objective function of the
k-means  algorithm. Without knowing the $\vvv_j$'s, we minimize it with
the $k$-dimensional vertex representatives. 

Akin to the calculations of \cite{Bolla25},
the weighted $k$-variance with the $k$-dimensional vertex representatives, 
obtained by the row vectors of the $n\times k$ matrix of column vectors
$\DD^{-1} ({\cal D}_{row}^{1/2}\z_j)^{in} $ $(j=1,\dots ,k )$,
will be small $(\le \ep' )$. The vectors 
${\cal D}_{row}^{1/2} \z_j$ are orthonormal by Theorem 1, and 
obtainable by numerical algorithms.

\section{Application}\label{appl}

The statements and methods of the preceding sections are illustrated on a
small-scale graph obtained by the Hamiltonian matrix of electronic
wave functions in a water molecule.
Such graphs are intrinsic objects in quantum chemistry, revealing 
chemical properties of a molecule like energy levels of electronic states.
The Hamiltonian matrix is real, symmetric. A simple graph is built on it,
as follows.
The vertices are the wave functions and their adjacency relation depends on
whether the modulus of the corresponding Hamiltonian
entry exceeds a certain threshold or not.

In our example, $n=133$ and 
$m=2210$, so the average degree of the graph is $c=\frac{2m}{n} = 33.23308$
which is very close to the largest eigenvalue of the non-backtracking
matrix $\B$: $\mu_1 = 32.96598$.
This gives rise to have a sparse stochastic block model $SBM_k$
behind the graph
with $k=7$, as the number of the eigenvalues of $\B$, greater than $\sqrt{c}$
is 7, see~\cite{Bolla25}.
Indeed, $\mu_7 =6.92078 > \sqrt{c} =5.76481  > \mu_8 =4.72075$
(the eigenvalues $\mu_i$'s of $\B$ are considered in order of their descending
moduli).
In the left panel of Figure~\ref{viz}, the eigenvalues $\lambda_i$'s of $\T$
and the corresponding $\frac{\mu_i }{\mu_1}$ ratios are plotted, which show
a good concordance for $i=1,\dots ,13$, in accord with Section~\ref{rel}.
(Note that the number of the
non-trivial positive real eigenvalues of $\B$ is 13, $\mu_{14}=1$, and there
are many other 1's, then $-1$'s and complex conjugate pairs of moduli less
than 1.) However, the number of the structural positive real eigenvalues of
$\B$ is 7, and there is also a gap in the spectrum of $\T$ after $\lambda_7$.
In the right panel of Figure~\ref{viz}, the 7 clusters obtained by the
$k$-means algorithm are shown (see Section~\ref{clust}), where the vertices
are rearranged with respect to their cluster memberships.
Observe that the 7 clusters constitute
relatively dense areas along the mail diagonal.

\begin{figure}[htbp]
\centering  
\subfigure[$13$ largest real eigenvalues $\lambda_i$ ($*$) of the matrix $\T$,
and the corresponding $\frac{\mu_i}{\mu_1}$ ratios ($+$) based on the
spectrum of $\B$.]{  
\begin{minipage}{5.6cm}
\centering  
\includegraphics[scale=0.65]{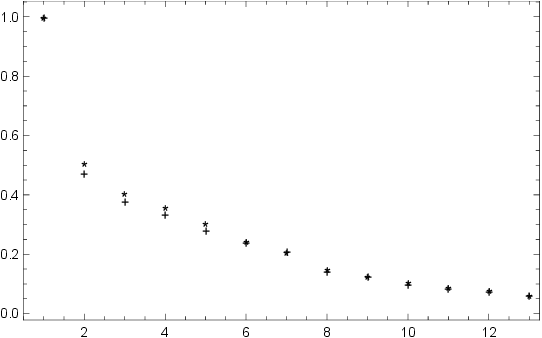} 
\vspace*{1mm}
\end{minipage}
}
\hfill
\subfigure[Clustering the vertices into $7$ clusters by the transformed and
shrunken $\T$-eigenvectors, corresponding to $\lambda_1 ,\dots ,\lambda_7$.
The vertices
are rearranged with respect to their cluster memberships when plotting the
1/0 (black/white) adjacency entries.]{
\begin{minipage}{5.6cm}
\centering
\includegraphics[scale=0.49]{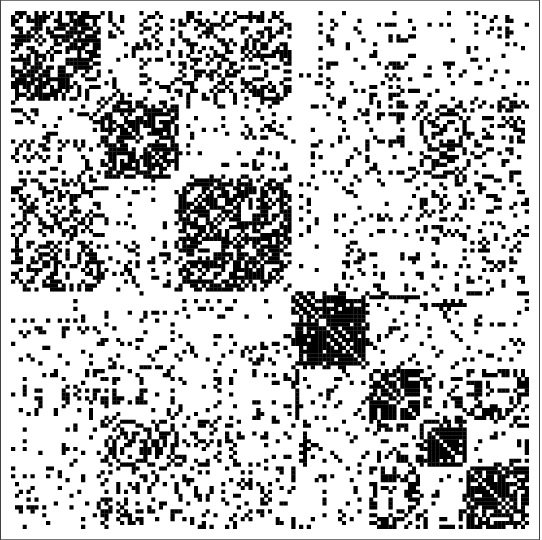}
\vspace*{2mm}
\end{minipage}
}
\caption{Transition probability based spectral clustering of the water
  molecule graph on 133 vertices and 2210 edges.} 
\label{viz}   
\end{figure}

\section*{Acknowledgements}

The author thanks Reittu Hannu for preparing the Figures and the HaQuPrA
project `Harnessing Quantum for Practical Applications'.

\end{document}